\newtheorem{theorem}{Theorem}[section]
\newtheorem{lemma}[theorem]{Lemma}
\numberwithin{equation}{section}
\theoremstyle{remark}
\newtheorem*{remark}{Remark}
\title{An improved explicit estimate for $\zeta(1/2+it)$}
\author{Ghaith A. Hiary, Dhir Patel and Andrew Yang}
\date{July 2022}
\begin{document}

\begin{abstract}
    An explicit subconvex bound for the Riemann zeta function $\zeta(s)$ on the critical
    line $s=1/2+it$ is proved. Previous
    subconvex bounds 
    relied on an incorrect version of the Kusmin--Landau lemma. 
    After accounting for the needed correction in that lemma,
    we recover and improve the record explicit bound for $|\zeta(1/2 + it)|$. 
\end{abstract}

\maketitle

\section{Introduction}
    An important type of inequality in analytic number theory is upper bounds 
    on the exponential sum $S=|\sum_{a\le n <b} e^{2\pi i f(n)}|$ for some smooth phase function $f$.
    Suppose that $f'$ is monotonic and $\theta \le f'\le 1-\theta$ for some $\theta \in [0,1/2]$ 
    and throughout $[a,b)$. One would like to derive inequalities
    of the form $S\le A/\theta$, where $A$ is a constant. Among other applications, such results 
    are used to derive explicit upper bounds for $|\zeta(1/2 + it)|$. 
    
    Kusmin~\cite[p. 239]{kuzmin_sur_1927} writes that 
    inequalities for $S$ were first introduced by Vinogradov in 1916. 
    According
    to Landau~\cite[p. 21]{landau_ueber_1928}, van der Corput was the first to
    prove a bound on $S$ depending only on $\theta$, independent
    of the length of the interval $[a,b)$.
    Using results in \cite[p. 58]{corput_1921} and \cite[p.
    221]{landau_1926}, which rely on the Poisson summation formula, 
    it follows easily that $A=4$ is admissible. 
    Kusmin~\cite[p. 237]{kuzmin_sur_1927} gave a geometric argument to 
    improve this to $A=1$. He then immediately noted that it follows from his proof
    that $A=2/\pi$ is also admissible.
    Landau~\cite[p. 21]{landau_ueber_1928} refined the Kusmin
    bound to $S\le \cot(\pi \theta/2)$ and constructed examples
    for which the equality $S=\cot(\pi \theta/2)$ holds.

In \cite[Lemma 2]{cheng_explicit_2004}, 
the bound $S \le 1/(\pi \theta)+1$
was derived. 
Unfortunately, this bound misses an extra factor of $2$, i.e. the leading constant in the bound 
should be $2/\pi$ instead of $1/\pi$. This was recently pointed out by 
K.\ Ford and also discussed in a preprint by J.\ Arias de Reyna. 
Indeed, Kusmin~\cite[p. 237]{kuzmin_sur_1927} and
Landau~\cite[p. 21]{landau_ueber_1928} had shown that $A=2/\pi$ is sharp. Therefore, 
there is no hope of recovering the constant $1/\pi$ in \cite[Lemma 2]{cheng_explicit_2004}
in the general case, but only in special (though important) 
cases such as when $f$ is linear.

The incorrect 
constant $1/\pi$ has impacted 
all published explicit subconvex bounds on $|\zeta(1/2 + it)|$.
In particular, the bound $|\zeta(1/2 + it)| \le 0.63 t^{1/6}\log t$ 
in \cite[Theorem 1.1]{hiary_explicit_2016} is affected.
This bound 
relied on an explicit $B$ process (from the method of exponent pairs) 
that is derived in \cite[Lemma 3]{cheng_explicit_2004}. In turn, this explicit
$B$ process relied on the version of the Kusmin--Landau lemma in \cite[Lemma
2]{cheng_explicit_2004} from which the incorrect constant $1/\pi$ arises. 
After accounting for the correct constant $2/\pi$ in the
Kusmin--Landau lemma, the constant $0.63$ in \cite[Theorem 1.1]{hiary_explicit_2016}
increases substantially to $0.77$.
In other words, the revised bound becomes $|\zeta(1/2 + it)| \le 0.77 t^{1/6}\log t$ for
$t\ge 3$. 

Since the missing factor of $2$ in \cite[Lemma 3]{cheng_explicit_2004} is sizeable and the
method of proof in \cite{hiary_explicit_2016} is already optimized, 
our savings had to come in small quantities from multiple places.  

We specialize the phase function to our specific 
application of bounding zeta, and derive better explicit $B$ and 
$AB$ processes in lemmas \ref{imp_imp_second_deriv_test} and \ref{imp_imp_third_deriv_test},
as well as a generalized Kusmin-Landau lemma in Lemma~\ref{cglem}. 

Moreover, we are
exceedingly careful in treating boundary terms in the explicit $A$ and $B$
processes we derive. Boundary term may be all that arises
in the intermediate region where $t$ is too large for the
Riemann--Siegel--Lehman bound to be useful, but too small for the asymptotic
savings from the $A$ and $B$ processes to be realized. This intermediate region 
(bottleneck region) is the subject of
subsection \ref{mediumt}. 

Also, our treatment for large $t$ improves on 
\cite{hiary_explicit_2016} in one important 
aspect that enables reducing the coefficient of $t^{1/6}\log t$ considerably, 
as detailed at the end of
subsection \ref{larget}.

Put together, our main result is to recover and improve 
the constant $0.63$.

\begin{theorem}\label{main_theorem}
For $t \ge 3$, we have 
\[
|\zeta(1/2 + it)| \le 0.618 t^{1/6} \log t.
\]
\end{theorem}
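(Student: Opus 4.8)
The plan is to begin from an explicit version of the approximate functional equation for $\zeta(s)$ on the critical line (of Riemann--Siegel type), which reduces the estimation of $|\zeta(1/2+it)|$ to bounding the truncated Dirichlet polynomial $\sum_{n \le N} n^{-1/2-it}$ with $N \asymp \sqrt{t/(2\pi)}$, up to an explicitly controlled remainder. Writing $n^{-it} = e^{2\pi i f(n)}$ with $f(x) = -(t/2\pi)\log x$, the derivatives $f^{(j)}(x) = (-1)^{j}(j-1)!\,t/(2\pi x^{j})$ are exactly of the form needed for the explicit van der Corput estimates: the generalized Kusmin--Landau bound of Lemma~\ref{cglem}, the $B$ process of Lemma~\ref{imp_imp_second_deriv_test}, and the $AB$ process of Lemma~\ref{imp_imp_third_deriv_test}. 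I would then split $[1,N]$ into (i) a short initial segment $n \le n_0$ handled by the trivial estimate $\sum_{n\le n_0} n^{-1/2} \le 2\sqrt{n_0}$ plus lower-order terms, and (ii) a dyadic decomposition of $(n_0, N]$ into blocks $(M,2M]$; on each block, partial summation peels off the smooth weight $n^{-1/2}$, leaving a pure exponential sum to which one applies whichever of Lemmas~\ref{cglem},~\ref{imp_imp_second_deriv_test},~\ref{imp_imp_third_deriv_test} is most efficient for that $M$ --- the Kusmin--Landau bound while $|f'| = t/(2\pi n)$ is only moderately small, and the $AB$ process (effectively the exponent pair $(1/6,2/3)$) once $M$ is large, where it yields a per-block contribution of order $t^{1/6}$.

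Summing the per-block bounds over the $O(\log t)$ dyadic blocks produces the shape $C\,t^{1/6}\log t$, and the content of the theorem is to make $C \le 0.618$. Since re-running the argument of \cite{hiary_explicit_2016} with the corrected (and sharp) constant $2/\pi$ in place of the erroneous $1/\pi$ inflates the constant all the way to $0.77$, the sub-$0.63$ target has to be reached by accumulating small savings from several independent sources: (a) keeping the sharp constant $2/\pi$ in Lemma~\ref{cglem} and deploying it, rather than a third-derivative estimate, on precisely those blocks where it is cheaper; (b) a tight --- rather than wasteful --- accounting of the boundary and endpoint terms generated by the $B$ and $AB$ processes; and (c) the refinement in the large-$t$ analysis flagged at the end of subsection~\ref{larget}, which trims the coefficient of $t^{1/6}\log t$ by removing a lossy step in the summation over the dyadic blocks. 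The cutoff $n_0$, the threshold in $M$ separating the Kusmin--Landau and $AB$ regimes, and the threshold $T_0$ below which $t$ is treated as ``small'' must all be chosen jointly, so that the competing error terms balance.

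Finally the bottleneck region $3 \le t \le T_0$ of subsection~\ref{mediumt} needs separate treatment: here $N \asymp \sqrt t$ is small, there are only finitely many (indeed few) blocks, and no asymptotic $t^{1/6}$ saving is available, so the bound must come purely from the explicit remainder in the functional equation together with the boundary terms of the exponent-pair estimates --- and, for the very smallest $t$, from a direct numerical comparison against $0.618\,t^{1/6}\log t$. I expect this to be the principal obstacle: in the bottleneck range there is no growing factor $t^{1/6}$ or $\log t$ in which to absorb inefficiencies, so $T_0$ and the intermediate-range estimates must be tuned so that the boundary-term-only bound still clears the target and dovetails exactly with the large-$t$ analysis, where the constant is itself only barely below $0.618$.
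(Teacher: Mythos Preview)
Your high-level plan is right in shape---Riemann--Siegel reduction, then block decomposition and explicit van der Corput estimates---but several specifics diverge from the paper in ways that would prevent you from reaching $0.618$ with the lemmas as stated.

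First, the decomposition: the paper does \emph{not} use dyadic blocks $(M,2M]$. It fixes $K=\lceil t^\phi\rceil$ (with $\phi=1/3$ for $t\ge 10^{12}$, and $\phi\approx 0.3414$ in the intermediate range) and partitions $[r_0 K, n_1]$ into equal-length blocks $[rK,(r+1)K)$ for $r=r_0,\ldots,R$. Lemmas~\ref{imp_imp_second_deriv_test} and~\ref{imp_imp_third_deriv_test} are stated for precisely this phase $f(x)=(t/2\pi)\log(rK+x)$ and length $K$, and the $\lambda^{2/3}$ gain in the definition of $\mu$ exploits that special form; they are not adapted to dyadic blocks of varying length. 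The $\log t$ in the final bound arises from $\sum_{r=r_0}^{R} 1/\sqrt{r(r+1)}\sim \tfrac{1}{6}\log t$, not from counting $O(\log t)$ dyadic scales.

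Second, your saving (a)---choosing, block by block, between Kusmin--Landau and the $AB$ process---is not what the paper does and is not where the improvement comes from. Lemma~\ref{cglem} is used only \emph{inside} the proof of the second-derivative test (Lemma~\ref{imp_imp_second_deriv_test}), on the subintervals produced there; at the outer block level the $AB$ process (Lemma~\ref{imp_imp_third_deriv_test}) is applied uniformly. The actual savings are: constants sharpened via the special phase in Lemmas~\ref{imp_imp_second_deriv_test}--\ref{imp_imp_third_deriv_test}; careful boundary handling there (your (b) is correct in spirit); and, for large $t$, arranging that the cross term $\beta K W^{1/3}$ feeds the coefficient of $t^{1/6}$ rather than of $t^{1/6}\log t$ (this is the content of your (c), but it is about splitting a square root in \eqref{S_ineq1}, not about the dyadic summation).

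Third, you have the region structure wrong. Interval arithmetic is used only for $3\le t<200$; the Riemann--Siegel--Lehman bound of Lemma~\ref{rsl_bound} covers $200\le t< 5.5\cdot 10^7$; and the ``bottleneck'' of subsection~\ref{mediumt} is the \emph{middle} range $5.5\cdot 10^7\le t<10^{12}$, treated with $\phi>1/3$ so that the secondary terms in \eqref{final_mediumt_bound} decay. Your description places the bottleneck at small $t$ and omits the RSL regime entirely.
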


Note that if one employs the Riemann–Siegel–Lehman bound
for any range of $t$ at all, then the leading constant cannot break $0.541$.
The constant we obtain is not too far from this barrier.

\subsection{Notation}
Throughout this work let $\|x\|$ denote 
the distance to the integer nearest to $x$, 
i.e. $\|x\| = \min_{n\in\mathbb{Z}}|x - n|$. 
We write $e(x) := \exp(2\pi i x)$.

\section{Required lemmas}

\begin{lemma}\label{hiarylem21}
If $t \ge 200$ and $n_1 := \lfloor \sqrt{t / (2\pi)}\rfloor$, then
\[
|\zeta(1/2 + it)| \le 2\left|\sum_{n = 1}^{n_1}n^{-1/2 + it}\right| + R(t)
\]
where $R(t) := 1.48 t^{-1/4} + 0.127 t^{-3/4}$.
\end{lemma}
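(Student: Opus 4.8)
The plan is to start from the Riemann--Siegel formula for $\zeta(1/2+it)$, or more precisely from an approximate functional equation / Riemann--Siegel--Lehman type expansion, and then control the remainder term explicitly. Writing $N = \lfloor\sqrt{t/(2\pi)}\rfloor$, the functional equation $\zeta(s) = \chi(s)\zeta(1-s)$ together with the main-sum approximation gives
\[
\zeta(1/2+it) = \sum_{n\le N} n^{-1/2+it} + \chi(1/2+it)\sum_{n\le N} n^{-1/2-it} + E(t),
\]
where $|\chi(1/2+it)| = 1$ on the critical line. Taking absolute values, the two sums have the same modulus (they are complex conjugates up to the unimodular factor), so the two main terms together contribute at most $2\bigl|\sum_{n=1}^{N} n^{-1/2+it}\bigr|$, which is exactly the shape of the claimed bound with $n_1 = N$. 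It therefore remains to bound $|E(t)|$ by $R(t) = 1.48\,t^{-1/4} + 0.127\,t^{-3/4}$.

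To do this I would invoke an explicit form of the Riemann--Siegel remainder estimate. Lehman's theorem (and its refinements by, e.g., Gabcke) gives an explicit bound of the form $|E(t)| \le C_0\, t^{-1/4} + (\text{lower order})$ once $t$ is large enough; the standard explicit constants here are comfortably below the $1.48$ appearing in $R(t)$, and the secondary term $0.127\,t^{-3/4}$ absorbs the next correction term in the Riemann--Siegel expansion (the one carrying the $\Psi$-function / derivative-of-$\cos$ factor) plus any slack from rounding $\sqrt{t/(2\pi)}$ down to an integer. Concretely, one writes the Riemann--Siegel formula to one or two terms, estimates the Gabcke/Lehman error, and checks numerically that the resulting total is $\le R(t)$ for all $t \ge 200$; monotonicity of the relevant error bounds in $t$ reduces this to a check at $t = 200$ plus an asymptotic tail argument.

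The one subtlety worth care is the passage from "$\zeta(1/2+it)$ equals main sum $+$ conjugate-type sum $+$ error" to the factor-of-$2$ bound: one must verify that $\chi(1/2+it)$ is genuinely unimodular (true, since $\overline{\chi(1/2+it)} = \chi(1/2-it) = \chi(1/2+it)^{-1}$) and that $\overline{\sum_{n\le N} n^{-1/2+it}} = \sum_{n\le N} n^{-1/2-it}$, so that the second main term has modulus exactly $\bigl|\sum_{n\le N} n^{-1/2+it}\bigr|$; then the triangle inequality yields the coefficient $2$ with no further loss. The only real obstacle is bookkeeping: assembling the explicit constants from the cited Riemann--Siegel--Lehman/Gabcke bounds so that they collapse to the clean two-term $R(t)$, and handling the floor in $n_1$ (moving from the true Riemann--Siegel cutoff to $\lfloor\sqrt{t/(2\pi)}\rfloor$ changes the sum by at most one term of size $\le N^{-1/2} \ll t^{-1/4}$, which is why $R(t)$ carries a constant as large as $1.48$ rather than the bare Lehman constant). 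None of this is deep; it is a careful explicit-constants computation, and I would expect the proof to be essentially a citation of Lehman's bound followed by a short numerical verification.
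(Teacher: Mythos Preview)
Your approach is essentially the same as the paper's: invoke the Riemann--Siegel formula, use $|\chi(1/2+it)|=1$ and the triangle inequality to collapse the two main sums into $2|\sum_{n\le n_1} n^{-1/2+it}|$, and bound the Gabcke remainder by $R(t)$ (the paper simply cites Lemma~2.1 of \cite{hiary_explicit_2016} for this). One small correction: your worry about ``moving from the true Riemann--Siegel cutoff to $\lfloor\sqrt{t/(2\pi)}\rfloor$'' is unfounded, since $n_1=\lfloor\sqrt{t/(2\pi)}\rfloor$ \emph{is} the Riemann--Siegel cutoff, so no extra term needs to be absorbed into the constant $1.48$.
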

\begin{proof}
    One starts with the Riemann--Siegel formula, and 
    applies the triangle inequality to the main
    sum and to the Gabcke remainder term, 
    bounding the latter by $R(t)$. 
    See Lemma 2.1 in \cite{hiary_explicit_2016}.
\end{proof}

\begin{lemma}[Riemann-Siegel-Lehman bound]\label{rsl_bound}
If $t \ge 200$, then 
\[
|\zeta(1/2 + it)| \le \frac{4t^{1/4}}{(2\pi)^{1/4}} - 2.08.
\]
\end{lemma}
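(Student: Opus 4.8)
The plan is to combine Lemma~\ref{hiarylem21} with a crude but effective bound on the main Dirichlet sum $\sum_{n=1}^{n_1} n^{-1/2+it}$, where $n_1 = \lfloor\sqrt{t/(2\pi)}\rfloor$. Indeed, by the triangle inequality, each term has modulus $n^{-1/2}$, so
\[
\left|\sum_{n=1}^{n_1} n^{-1/2+it}\right| \le \sum_{n=1}^{n_1} n^{-1/2} \le 2\sqrt{n_1} - 1,
\]
the last step being the elementary estimate $\sum_{n=1}^{N} n^{-1/2} \le 2\sqrt N - 1$, which one proves by comparing the sum with $\int_0^N x^{-1/2}\,dx = 2\sqrt N$ after peeling off the first term (or by induction on $N$). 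Since $n_1 \le \sqrt{t/(2\pi)}$, this gives $|\sum_{n\le n_1} n^{-1/2+it}| \le 2(t/(2\pi))^{1/4} - 1$.

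Plugging this into Lemma~\ref{hiarylem21} yields, for $t \ge 200$,
\[
|\zeta(1/2+it)| \le 2\bigl(2(t/(2\pi))^{1/4} - 1\bigr) + R(t) = \frac{4 t^{1/4}}{(2\pi)^{1/4}} - 2 + R(t),
\]
with $R(t) = 1.48 t^{-1/4} + 0.127 t^{-3/4}$. It therefore remains to check that $-2 + R(t) \le -2.08$ for all $t \ge 200$, i.e. that $R(t) \le 0.92$ throughout this range... wait, that is the wrong direction. Actually one needs $R(t) \le -0.08$, which is impossible since $R(t) > 0$. So the crude bound $2\sqrt{n_1}-1$ is slightly too lossy; the fix is to use the sharper truncation $n_1 \le \sqrt{t/(2\pi)}$ together with the observation that, because $n_1$ is an \emph{integer}, when $t \ge 200$ we have $n_1 \ge 5$ and one can extract a bit more from the Euler–Maclaurin comparison: $\sum_{n=1}^{n_1} n^{-1/2} \le 2\sqrt{n_1} - 1 - c/\sqrt{n_1}$ for a suitable $c$ (for instance $c = 1/2 - \tfrac{1}{\sqrt2} \approx$ small, or more cleanly $\sum_{n=1}^N n^{-1/2} \le 2\sqrt N - \zeta(1/2) - \tfrac12 N^{-1/2}$ asymptotically). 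The main technical point is thus to get a second-order-accurate upper bound on $\sum_{n\le n_1} n^{-1/2}$ and to verify the resulting elementary inequality in $t$ on $[200,\infty)$; since both sides behave like $t^{1/4}$ plus lower-order terms, it suffices to check the inequality at $t = 200$ and confirm monotonicity of the difference, which is routine calculus.

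The only genuine obstacle is the bookkeeping in the previous paragraph: one must choose the Euler–Maclaurin correction for $\sum n^{-1/2}$ sharply enough that the $-2$ improves to $-2.08$ after absorbing $R(t)$, yet crudely enough to stay elementary. A clean route is to write $\sum_{n=1}^{N} n^{-1/2} = 2\sqrt N + \zeta(1/2) + \tfrac12 N^{-1/2} + O(N^{-3/2})$ with an \emph{explicit} error, use $\zeta(1/2) \approx -1.4603$, and bound $N^{-1/2} \le (t/(2\pi))^{-1/2}$, after which the inequality $\tfrac12 N^{-1/2} + R(t) \le 2 - 2.08 - \zeta(1/2) = \text{(a positive constant)}$ holds comfortably for $t \ge 200$. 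I expect the whole argument to be at most a half-page, and I would cite Lehman's original paper (or the corresponding statement in \cite{hiary_explicit_2016}) for the precise form actually used.
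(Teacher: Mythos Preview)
Your overall plan---start from Lemma~\ref{hiarylem21}, bound $\left|\sum_{n\le n_1}n^{-1/2+it}\right|$ by $\sum_{n\le n_1}n^{-1/2}$, and bound $R(t)$ by its value at $t=200$---is exactly what the paper does. The only difference is in how the partial sum $\sum_{n\le n_1}n^{-1/2}$ is estimated. The paper simply notes that $n_1\ge 5$ for $t\ge 200$, evaluates $\sum_{n=1}^{5}n^{-1/2}=3.2317\ldots$ numerically, and bounds the tail by $\sum_{n=6}^{n_1}n^{-1/2}\le\int_5^{n_1}x^{-1/2}\,dx=2\sqrt{n_1}-2\sqrt{5}$. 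This gives $\sum_{n\le n_1}n^{-1/2}\le 2\sqrt{n_1}-1.240$, hence $|\zeta(1/2+it)|\le 4(t/2\pi)^{1/4}-2.481+R(200)\le 4(t/2\pi)^{1/4}-2.08$, with no appeal to $\zeta(1/2)$ or higher Euler--Maclaurin terms.

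Your Euler--Maclaurin route, as written, does not close. After doubling (from Lemma~\ref{hiarylem21}) the bound $\sum_{n\le N}n^{-1/2}\le 2\sqrt N+\zeta(1/2)+\tfrac12 N^{-1/2}$, the condition you actually need is $n_1^{-1/2}+R(t)\le -2.08-2\zeta(1/2)\approx 0.8407$, not the expression $\tfrac12 N^{-1/2}+R(t)\le 2-2.08-\zeta(1/2)$ you wrote. At $t=200$ one has $n_1=5$, and $5^{-1/2}+R(200)\approx 0.4472+0.3960=0.8432>0.8407$, so the inequality fails by about $0.003$; you would need the next Euler--Maclaurin term to recover it. Also, your claimed bound ``$N^{-1/2}\le (t/(2\pi))^{-1/2}$'' has the inequality in the wrong direction (since $N\le\sqrt{t/(2\pi)}$) and the wrong exponent. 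None of this is fatal---the paper's five-term-plus-integral device fixes everything in one line---but the proposal as it stands has a genuine numerical gap at the left endpoint $t=200$.
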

\begin{proof}
    Follows from Lemma~\ref{hiarylem21} 
    on bounding the main sum for $n> 5$  by an integral 
    and using monotonicity to bound $R(t)$ for $t\ge
    200$. See Lemma 2.3 in \cite{hiary_explicit_2016}.
\end{proof}

\begin{lemma}[Generalised Cheng-Graham lemma]\label{cglem}
Let $f(x)$ be a real-valued function with a monotonic and continuous derivative on $[a, b)$, satisfying 
\[\ell+U^{-1} \le f'(x) \le \ell+1 - V^{-1},\qquad x\in[a, b),\]
for some $U, V > 1$ and some integer $\ell$. Then,
\[
\left|\sum_{a \le n < b}e(f(n))\right| \le \frac{U + V}{\pi}.
\]
\end{lemma}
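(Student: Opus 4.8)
The plan is to run the classical Kusmin--Landau summation-by-parts argument, tracking the boundary contributions carefully so that the constant comes out as exactly $(U+V)/\pi$. First I would reduce to $\ell=0$: since $\ell$ is an integer and the sum is over integers, replacing $f(x)$ by $f(x)-\ell x$ does not change $\sum_{a\le n<b}e(f(n))$ and turns the hypothesis into $U^{-1}\le f'(x)\le 1-V^{-1}$ on $[a,b)$. If $[a,b)$ is nonempty this forces $U^{-1}+V^{-1}\le 1$, hence $U+V\ge 4>\pi$ by AM--HM; so the target bound exceeds $1$ and the cases where $[a,b)$ contains at most one integer are trivial. Assume henceforth that the integers in $[a,b)$ are exactly $M,M+1,\dots,N-1$ with $N-1\ge M+1$.

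Next I would set $g_n=e(f(n))$ and, for $M\le n\le N-2$, $\delta_n=f(n+1)-f(n)$. By the mean value theorem on $[n,n+1]\subseteq[a,b)$ we have $\delta_n=f'(\xi_n)\in[U^{-1},1-V^{-1}]\subset(0,1)$, so $e(\delta_n)\neq 1$ and $c_n:=(e(\delta_n)-1)^{-1}$ is well defined; a direct computation gives $c_n=-\tfrac12-\tfrac{i}{2}\cot(\pi\delta_n)$, whence $|c_n|=\tfrac12\csc(\pi\delta_n)$ and also $|c_n+1|=\tfrac12\csc(\pi\delta_n)$. The basic identity is $g_n=c_n(g_{n+1}-g_n)$ for $M\le n\le N-2$.

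The core of the argument is summation by parts. Writing
\[
\sum_{M\le n<N}g_n=g_{N-1}+\sum_{n=M}^{N-2}c_n(g_{n+1}-g_n)
\]
and applying Abel summation to the second sum, then combining the resulting $c_{N-2}g_{N-1}$ with the first term, gives
\[
\sum_{M\le n<N}g_n=(c_{N-2}+1)g_{N-1}-c_M g_M+\sum_{m=M+1}^{N-2}(c_{m-1}-c_m)g_m.
\]
Since $|g_m|=1$, $|c_M|=\tfrac12\csc(\pi\delta_M)$, $|c_{N-2}+1|=\tfrac12\csc(\pi\delta_{N-2})$, and $|c_{m-1}-c_m|=\tfrac12|\cot(\pi\delta_{m-1})-\cot(\pi\delta_m)|$, and since monotonicity of $f'$ makes $(\delta_n)_n$ monotone and hence $(\cot(\pi\delta_n))_n$ monotone ($\cot(\pi\cdot)$ being decreasing on $(0,1)$), the last sum telescopes. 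Hence
\[
\Bigl|\sum_{M\le n<N}g_n\Bigr|\le\tfrac12\csc(\pi\delta_M)+\tfrac12\csc(\pi\delta_{N-2})+\tfrac12\bigl|\cot(\pi\delta_M)-\cot(\pi\delta_{N-2})\bigr|.
\]

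To finish, put $\alpha=\delta_M$ and $\beta=\delta_{N-2}$ and assume without loss of generality $\alpha\le\beta$ (the opposite case is symmetric). The right-hand side becomes $\tfrac12(\csc\pi\alpha+\cot\pi\alpha)+\tfrac12(\csc\pi\beta-\cot\pi\beta)$, which by the half-angle identities $\csc\theta+\cot\theta=\cot(\theta/2)$ and $\csc\theta-\cot\theta=\tan(\theta/2)$ equals $\tfrac12\cot(\pi\alpha/2)+\tfrac12\tan(\pi\beta/2)$. Using $\alpha\ge U^{-1}$, $\beta\le 1-V^{-1}$, and the monotonicity of $\cot$ and $\tan$, this is at most $\tfrac12\cot(\pi/(2U))+\tfrac12\cot(\pi/(2V))$, and then $\cot x\le 1/x$ on $(0,\pi/2)$ (equivalently $\tan x\ge x$) gives $\tfrac12\cot(\pi/(2U))\le U/\pi$ and similarly for $V$, which is the claimed bound. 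I expect the main obstacle to be the careful handling of the extra boundary term $g_{N-1}$: the identity $g_n=c_n(g_{n+1}-g_n)$ does not cover it, and after summation by parts it merges into $(c_{N-2}+1)g_{N-1}$, where the coincidence $|c_{N-2}+1|=|c_{N-2}|$ is precisely what keeps the constant at $(U+V)/\pi$ rather than something larger; one should also make sure every use of $f$, $f'$, and the mean value theorem stays inside $[a,b)$.
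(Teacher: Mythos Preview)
Your proof is correct and follows essentially the same route as the paper: reduction to $\ell=0$, the identity $g_n=c_n(g_{n+1}-g_n)$ with $c_n=(e(\delta_n)-1)^{-1}$ (the paper uses $G(n)=1/(1-e(g(n)))=-c_n$), Abel summation with a telescoping cotangent sum, and the half-angle identities leading to $\tfrac12\cot(\pi/(2U))+\tfrac12\cot(\pi/(2V))\le (U+V)/\pi$. The only cosmetic difference is that the paper first normalises to $f'$ increasing (replacing $f$ by $-f$), whereas you handle both monotonicity directions at the end via the ``$\alpha\le\beta$ or symmetric'' split.
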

\begin{proof}The proof follows from a similar argument to Lemma 2.1 in Patel
    \cite{PATEL2022} and Kuzmin \cite{kuzmin_sur_1927} and Landau
    \cite{landau_ueber_1928}. See Section \ref{cglemproof} for details.
This generalized lemma is essential
to the cases considered at the end of the proof of
Lemma~\ref{imp_imp_second_deriv_test}. 
\end{proof}

\begin{lemma}[Weyl differencing]\label{weyldifflem}
Let $f(n)$ be a real-valued function and $L$ and $M$ positive integers. Then
\[
\left|\sum_{n = N + 1}^{N + L}e(f(n))\right|^2 \le \left(\frac{L + M - 1}{M}\right)\left(L + 2\sum_{m = 1}^{M}\left(1 - \frac{m}{M}\right)|s'_m(L)|\right)
\]
    where if $m < L$, then 
\[
s_m'(L) := \sum_{n = N + 1}^{N + L - m}e(f(r + m) - f(r)),
\]
    and if $m\ge L$, then $s_m'(L)=0$.
\end{lemma}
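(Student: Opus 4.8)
The plan is to run the classical Weyl differencing argument (van der Corput's $A$-process): average the sum over $M$ consecutive shifts, apply Cauchy--Schwarz, and collect the resulting difference sums.

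First I would set $S := \sum_{n=N+1}^{N+L} e(f(n))$ and note that, for every integer $m$ with $0 \le m \le M-1$, a shift of the summation index gives $S = \sum_{n=N+1-m}^{N+L-m} e(f(n+m))$. Summing this over $m = 0, 1, \dots, M-1$ and interchanging the order of summation yields $MS = \sum_{n \in I} b_n$, where $I$ is the set of integers $n$ for which $n+m \in [N+1, N+L]$ for some $m \in \{0,\dots,M-1\}$ --- an interval of exactly $L+M-1$ integers --- and $b_n := \sum_{m} e(f(n+m))$, the inner sum ranging over those $m \in \{0,\dots,M-1\}$ with $N+1 \le n+m \le N+L$. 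Applying Cauchy--Schwarz to $MS = \sum_{n\in I} b_n$ gives $M^2|S|^2 \le (L+M-1)\sum_{n\in I}|b_n|^2$.

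Next I would expand $|b_n|^2 = \sum_{m_1,m_2} e(f(n+m_1)-f(n+m_2))$, sum over $n \in I$, and group the pairs $(m_1,m_2)$ by the value $m := m_1 - m_2$. The diagonal $m=0$ contributes exactly $ML$; each $m$ with $1 \le m \le M-1$ arises from $M-m$ pairs, and for each such pair the inner sum over the admissible $n$, after the substitution $r = n+m_2$, becomes precisely $\sum_{r=N+1}^{N+L-m} e(f(r+m)-f(r)) = s'_m(L)$, independently of the particular pair; the pairs with $m_1-m_2 = -m$ contribute $\overline{s'_m(L)}$. Hence
\[
\sum_{n\in I}|b_n|^2 = ML + 2\sum_{m=1}^{M-1}(M-m)\,\mathrm{Re}\,s'_m(L).
\]
Dividing by $M^2$, bounding $\mathrm{Re}\,s'_m(L) \le |s'_m(L)|$, extending the last sum harmlessly from $m=M-1$ to $m=M$ (the $m=M$ term vanishes), and using the convention $s'_m(L)=0$ for $m\ge L$, gives the stated inequality. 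The only delicate point is the bookkeeping in the two interchanges of summation: confirming that $|I| = L+M-1$ exactly, so that the Cauchy--Schwarz constant is as claimed, and checking that after fixing a pair $(m_1,m_2)$ with difference $m>0$ the index $r$ genuinely runs over $N+1,\dots,N+L-m$; everything else is routine.
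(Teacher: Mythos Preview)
Your argument is correct and is precisely the classical van der Corput $A$-process that the paper has in mind; the paper itself does not write out a proof but merely cites Cheng--Graham \cite{cheng_explicit_2004} and Platt--Trudgian \cite{platt_improved_2015}, noting that the $\max_{L_1\le L}$ appearing in \cite[Lemma~5]{cheng_explicit_2004} can be dropped by using the more precise form given there. Your bookkeeping (that $|I|=L+M-1$ and that, after the substitution $r=n+m_2$, the index $r$ runs exactly over $N+1,\dots,N+L-m$) is accurate, so nothing further is needed.
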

\begin{proof}
    See Cheng and Graham \cite[Lemma 5]{cheng_explicit_2004} and Platt and
    Trudgian \cite{platt_improved_2015}. 
        Lemma 5 in \cite{cheng_explicit_2004}
    appears with $\max_{L_1\le L}|s'_m(L_1)|$ instead of $|s'_m(L)|$.
    It was pointed out in \cite{hiary_explicit_2016}, though, that 
    one can remove the $\max$ by using 
    the more precise form presented at the bottom of page 1273 in 
    \cite{cheng_explicit_2004}.
\end{proof}

\begin{lemma}[Improved second derivative test]\label{imp_imp_second_deriv_test}
        Let $m, r, K$ be positive integers, and let $t$ and $K_0$ be a positive
        numbers. 
        Suppose $K \ge K_0 > 1$ and $m < K$. Let
    
\begin{align*}
g(x) := \frac{t}{2\pi}\log\left(1 + \frac{m}{rK + x}\right),\qquad W :=
    \frac{\pi(r + 1)^3K^3}{t},\qquad\lambda := \frac{(1 + r)^3}{r^3}.
\end{align*}
    So, by construction, 
\[
    \frac{m}{W} \le |g''(x)| \le \frac{m\lambda}{W}, \qquad (0\le
    x\le K-m).
\]
    For each positive integer $L \le K$, and each positive integer $m<L$,
\[
    \left|\sum_{n=0}^{L - 1 - m}e(g(n))\right| \le \frac{4\mu K}{\sqrt{\pi
    W}}m^{1/2} + \frac{\mu K}{W}m + 4\sqrt{\frac{W}{\pi}}m^{-1/2} + 2 - \frac{4}{\pi},
\]
where 
\[
\mu := \frac{1}{2}\lambda^{2/3}\left(1 + \frac{1}{(1 - K_0^{-1})\lambda^{1/3}}\right).
\]
        If $m\ge L$ or $L=1$, then the sum on the left-side is zero and the bound still holds.  
\end{lemma}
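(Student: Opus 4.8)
The strategy is a standard "$B$-process" argument: apply Weyl differencing (Lemma \ref{weyldifflem}) once to the sum $\sum_{n=0}^{L-1-m} e(g(n))$, which replaces it by a bound involving the shorter sums $s_k'(L-m)$ built from the second difference $g(n+k)-g(n)$; then estimate each $s_k'$ using the van der Corput $B$-process, i.e. Poisson summation / the method of stationary phase applied to the phase $g(n+k)-g(n)$, whose derivative with respect to $n$ is essentially $k\,g''$ by the mean value theorem, hence monotonic and of controlled size because $m/W \le |g''| \le m\lambda/W$ on the relevant range. At that point the Kusmin--Landau estimate — in the generalised form of Lemma \ref{cglem}, which is exactly what is flagged as "essential" in the lemma's statement — is invoked to bound the resulting exponential sums by a constant over $\pi$ times the reciprocal of the gap from the nearest integer, and one optimises the free differencing parameter $M$.

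In more detail, I would proceed as follows. First, record the second-derivative bounds: since $g(x) = \tfrac{t}{2\pi}\log(1+m/(rK+x))$, a direct computation gives $g''(x) = \tfrac{t}{2\pi}\cdot\big(\text{explicit rational function of } rK+x \text{ and } m\big)$, and on $0\le x\le K-m$ one checks $rK+x$ ranges in $[rK,(r+1)K]$, yielding $m/W \le |g''(x)|\le m\lambda/W$ with $W,\lambda$ as defined — this is the "by construction" clause and needs only monotonicity of the rational function in $x$. Second, for a fixed difference parameter $k$ (playing the role of $m$ in Lemma \ref{weyldifflem}; I will rename to avoid clashing with the $m$ already in $g$), write $h_k(x) := g(x+k)-g(x)$, so $h_k'(x) = k\,g''(\xi)$ for some $\xi$ between $x$ and $x+k$ by the mean value theorem, giving $km/W \le |h_k'(x)| \le km\lambda/W$; this shows $h_k'$ is squeezed into an interval of length $\le km(\lambda-1)/W$, and after subtracting the integer part of a suitable endpoint one applies Lemma \ref{cglem} with $U^{-1}+V^{-1}$ governed by how far $h_k'$ sits from the two nearest integers. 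Summing the Kusmin--Landau bounds over $k=1,\dots,M$ with weights $(1-k/M)$ (and handling the "trivial" range where $km\lambda/W$ is too large or where the interval for $h_k'$ straddles an integer, which is where the van der Corput stationary-phase/$B$-process estimate $\ll \sqrt{W/(km)} + 1$ is used instead) produces, after inserting into Weyl differencing and taking square roots, a bound of the shape $c_1 \tfrac{K}{\sqrt{\pi W}}\sqrt{m} + c_2 \tfrac{K}{W}m + c_3\sqrt{W/\pi}\,m^{-1/2} + c_4$. Third, choose $M$ optimally — roughly $M \asymp \sqrt{W/m}$ — and track the constants carefully (this is where the factor-of-$2$ correction from Lemma \ref{cglem} and the careful boundary-term bookkeeping advertised in the introduction enter), pinning down $c_1 = 4\mu$, $c_2 = \mu$, $c_3 = 4$, $c_4 = 2 - 4/\pi$, with $\mu = \tfrac12\lambda^{2/3}\big(1 + \tfrac{1}{(1-K_0^{-1})\lambda^{1/3}}\big)$; the factor $(1-K_0^{-1})^{-1}$ is precisely the slack one loses from the lower bound $K\ge K_0$ when converting $g''$-bounds into bounds valid after differencing and from rounding $M$ to an integer.

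The main obstacle, I expect, is not any single estimate but the \emph{uniformity and constant-chasing}: one must ensure every inequality holds for \emph{all} $L\le K$ and \emph{all} $1\le m<L$ simultaneously (so the shorter sum has length $L-m$, possibly much smaller than $K$, yet the bound is stated in terms of $K$), that the Kusmin--Landau/Lemma \ref{cglem} hypotheses ($U,V>1$, monotonic continuous derivative) are genuinely met on each subinterval after accounting for where $h_k'$ crosses integers, and that the split between "Kusmin--Landau range" and "van der Corput range" of $k$ is made so that the constants combine into exactly the clean $\mu$ above rather than something larger. The degenerate cases $m\ge L$ or $L=1$ are immediate since the sum is empty and the right-hand side is manifestly positive (indeed $2-4/\pi>0$). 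I would organise the write-up by first proving the $g''$ bounds, then stating a clean one-variable Kusmin--Landau-type sublemma tailored to $h_k$, then assembling via Weyl differencing and optimising $M$.
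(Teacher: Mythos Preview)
Your plan has a structural error: you propose to begin by applying Weyl differencing (Lemma~\ref{weyldifflem}) to $\sum e(g(n))$, but Weyl differencing is the $A$ process, and Lemma~\ref{imp_imp_second_deriv_test} is a second-derivative (i.e.\ $B$-process) estimate that is meant to be \emph{fed into} the Weyl differencing of Lemma~\ref{imp_imp_third_deriv_test}, not to consume it. The phase $g$ already \emph{is} the differenced phase $f(\cdot+m)-f(\cdot)$; differencing it again with a new parameter $k$ produces phases $h_k$ with $|h_k'|\approx km/W$, and bounding those via Kusmin--Landau yields $|s_k'|\lesssim W/(\pi k m)$, so that $\sum_{k\le M}(1-k/M)|s_k'|$ acquires a $\log M$ factor that cannot be removed by any choice of $M$. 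Note too that the lemma's bound is a direct four-term linear expression in $m^{1/2}, m, m^{-1/2}, 1$, not the square root of a Weyl-type quadratic form --- another signal that no differencing step belongs here.

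The paper's proof is instead the subdivision form of the second derivative test, applied directly to $g$. One fixes a threshold $\Delta\in(0,1/2)$ and partitions $[0,L-1-m]$ into short ``bad'' intervals $[x_j,y_j)$ where $g'$ lies within $\Delta$ of an integer (each of length at most $2W\Delta/m$ by the lower bound $|g''|\ge m/W$, handled trivially) and complementary ``good'' intervals $[y_j,x_{j+1})$ where $\|g'\|\ge\Delta$ (handled by Lemma~\ref{cglem} with $U=V=\Delta^{-1}$, contributing $2/(\pi\Delta)$ each). The number $k$ of integer crossings of $g'$ is bounded using the \emph{explicit} form of $g'$ --- not merely the generic inequality $|g''|\le m\lambda/W$ --- via a direct computation of $g'(L-1-m)-g'(0)$, and this is exactly what produces $k\le \mu Km/W+\epsilon_1-\epsilon_2$ with the sharpened factor $\lambda^{2/3}$ inside $\mu$ rather than $\lambda$. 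The two boundary pieces $[0,x_1)$ and $[y_k,L-1-m]$ are where the \emph{asymmetric} version of Lemma~\ref{cglem} (allowing different $U$ and $V$) is genuinely needed, which is why that generalisation is flagged as essential. Balancing with $\Delta=\sqrt{m/(\pi W)}$ then yields the four terms with constants $4\mu,\ \mu,\ 4,\ 2-4/\pi$, the last coming from a case analysis bounding a boundary function $H(\epsilon)$.
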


\begin{proof}
    See Section \ref{second_deriv_proof}. 
\end{proof}

\begin{remark}
    We have $g(x) = f(x+m)-f(x)$, where
    $f(x)$ is defined in Lemma~\ref{imp_imp_third_deriv_test}.
    Thus, $g(x)$ is the phase function that arises
    after we apply the Weyl
    differencing from Lemma~\ref{weyldifflem} to a contiguous portion of the main sum in
    Lemma~\ref{rsl_bound}.
\end{remark}

\begin{lemma}[Improved third derivative test]\label{imp_imp_third_deriv_test}
Let $r$ and $K$ be positive integers, and let $t$ and $K_0$ be positive numbers.
    Suppose $K \ge K_0 > 1$. Let 
\begin{align*}
f(x) := \frac{t}{2\pi}\log\left(rK + x\right).
\end{align*}
Furthermore let $W$ and $\lambda$ be as defined in Lemma \ref{imp_imp_second_deriv_test}, so that 
\[
\frac{1}{W} \le |f'''(x)| \le \frac{\lambda}{W}.
\]
For each positive integer $L \le K$, each integer $N$,
and any $\eta > 0$,
\[
\left|\sum_{n = N + 1}^{N + L}e(f(n))\right|^2 \le \left(\frac{K}{W^{1/3}} + \eta\right)(\alpha K + \beta W^{2/3})
\]
where
\[
\alpha := \frac{1}{\eta} + \frac{\eta\mu}{3W^{1/3}} + \frac{\mu}{3W^{2/3}} + \frac{32\mu}{15\sqrt{\pi}}\sqrt{\eta + W^{-1/3}},
\]
\[
    \beta := \frac{32}{3\sqrt{\pi \eta}} + \left(2 - \frac{4}{\pi}\right)\frac{1}{W^{1/3}}.
\]
Here, $\mu$ is defined as in Lemma \ref{imp_imp_second_deriv_test}.
\end{lemma}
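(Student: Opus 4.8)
The plan is to combine Weyl differencing (Lemma~\ref{weyldifflem}) with the improved second derivative test (Lemma~\ref{imp_imp_second_deriv_test}), choosing the differencing parameter $M$ in terms of $W^{1/3}$. First I would apply Lemma~\ref{weyldifflem} with the phase $f$ of the present lemma and with $M := \lceil K/W^{1/3}\rceil$ (or, more cleanly, take $M$ to be a real parameter to be optimized and absorb the rounding into the $\eta$ slack at the end). This gives
\[
\left|\sum_{n=N+1}^{N+L}e(f(n))\right|^2 \le \left(\frac{L+M-1}{M}\right)\left(L + 2\sum_{m=1}^{M}\left(1-\frac{m}{M}\right)|s_m'(L)|\right),
\]
and since $L \le K$ the prefactor is at most $(K+M-1)/M \le K/M + 1 \le K/W^{1/3} + \eta$ once $M$ is chosen so that $K/M \le K/W^{1/3}$ and $1 \le \eta$ — here one has to be slightly more careful, since $\eta$ is allowed to be small, so instead I would keep the prefactor as $(K+M-1)/M$ and pick $M = \lfloor (K/W^{1/3}) + \eta \rfloor$ or similar, arranging that $(L+M-1)/M \le K/W^{1/3} + \eta$ exactly. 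The point is that the first factor in the target bound, $K/W^{1/3} + \eta$, is precisely what the Weyl prefactor contributes.

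Next I would bound the inner sum. By the remark preceding Lemma~\ref{imp_imp_third_deriv_test}, $s_m'(L)$ is (up to the shift $N$, which the bound in Lemma~\ref{imp_imp_second_deriv_test} is uniform in) an exponential sum with phase $g(x) = f(x+m) - f(x)$ of length at most $L - m$, so Lemma~\ref{imp_imp_second_deriv_test} applies verbatim and gives
\[
|s_m'(L)| \le \frac{4\mu K}{\sqrt{\pi W}}m^{1/2} + \frac{\mu K}{W}m + 4\sqrt{\frac{W}{\pi}}m^{-1/2} + 2 - \frac{4}{\pi},
\]
valid for all $1 \le m < L$ and trivially (as $0$) when $m \ge L$. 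Substituting this into $2\sum_{m=1}^{M}(1-m/M)|s_m'(L)|$ and using the elementary estimates
\[
\sum_{m=1}^{M}\Bigl(1-\tfrac{m}{M}\Bigr)m^{1/2} \le \tfrac{4}{15}M^{3/2}, \quad \sum_{m=1}^{M}\Bigl(1-\tfrac{m}{M}\Bigr)m \le \tfrac{M^2}{6}, \quad \sum_{m=1}^{M}\Bigl(1-\tfrac{m}{M}\Bigr)m^{-1/2} \le \tfrac{4}{3}M^{1/2}
\]
(the first and third coming from comparison of $\sum (1-m/M)m^{\pm 1/2}$ with $\int_0^M (1-x/M)x^{\pm 1/2}\,dx = \frac{4}{15}M^{3/2}$ and $\frac{4}{3}M^{1/2}$ respectively, the second from $\int_0^M(1-x/M)x\,dx = M^2/6$), together with $\sum_{m=1}^M(1-m/M) \le M/2$ for the constant term, I would collect everything. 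With $M \le K/W^{1/3} + \eta =: P$ and also $M \le K/W^{1/3} + \eta$ used to replace powers of $M$, the inner quantity $L + 2\sum(\cdots)$ becomes at most a linear combination of $K$ and $W^{2/3}$ — indeed $L \le K$ contributes to the $\alpha K$ term, the $m^{1/2}$ sum contributes $\frac{8}{15}\cdot\frac{4\mu K}{\sqrt{\pi W}}M^{3/2} \approx \frac{32\mu}{15\sqrt\pi}K\sqrt{P}\cdot W^{-1/2}M^{?}$, and so on — and matching the powers of $W$ is exactly what produces the stated $\alpha$ and $\beta$.

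The main obstacle, and the place demanding the most care, is the bookkeeping that converts the mixed powers $M^{3/2}, M^2, M^{1/2}$ and the factors $K/\sqrt{W}$, $K/W$, $\sqrt{W}$ into a clean expression of the shape $\alpha K + \beta W^{2/3}$ with $\alpha, \beta$ as given. Concretely one has $M \approx K W^{-1/3}$, so $M^2 \cdot (K/W) \approx K^2 W^{-1} \cdot K W^{-1/3}\cdot\ldots$ — one must track which group of terms scales like $K$ and which like $W^{2/3}$, and then use $M \le P = K W^{-1/3} + \eta$ uniformly. For instance the term $\frac{\mu K}{W}\cdot 2\cdot\frac{M^2}{6} = \frac{\mu K M^2}{3W}$; writing one factor of $M$ as $\le P$ and using $M \le K W^{-1/3}$ for the other gives $\frac{\mu K}{3W}\cdot K W^{-1/3}\cdot P = \frac{\mu K^2}{3 W^{4/3}} P$, and since the outer prefactor is $\le P/K \cdot (K/W^{1/3}+\eta)$... — this is precisely the kind of rearrangement where the $\frac{\eta\mu}{3W^{1/3}} + \frac{\mu}{3W^{2/3}}$ pieces of $\alpha$ emerge from $P = KW^{-1/3}+\eta$, and one must be scrupulous that every rounding in the choice of the integer $M$ is dominated by the $\eta$ already present. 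Once the powers are correctly matched, the final inequality is just the product of the prefactor $\le (K/W^{1/3}+\eta)$ with $(\alpha K + \beta W^{2/3})$, and the degenerate cases $m \ge L$ or $L = 1$ are handled exactly as in Lemma~\ref{imp_imp_second_deriv_test} since then the relevant sums vanish.
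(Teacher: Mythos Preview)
Your overall architecture is right---Weyl differencing, then Lemma~\ref{imp_imp_second_deriv_test} on each $s_m'$, then the four elementary estimates for $\sum_{m\le M}(1-m/M)m^{\theta}$---and these are exactly the ingredients in the paper. The gap is in the choice of $M$: you take $M\approx K/W^{1/3}$ (and later try $M=\lfloor K/W^{1/3}+\eta\rfloor$), whereas the correct scale is $M=\lceil \eta W^{1/3}\rceil$. These differ unless $K\asymp W^{2/3}$, which is not assumed.

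With $M\approx K/W^{1/3}$ the prefactor $(L+M-1)/M\le K/M+1$ is about $W^{1/3}+1$, not $K/W^{1/3}+\eta$ as you claim; and the $1/\eta$ term in $\alpha$ has no source in your arrangement, since the inner $L$ contributes only $\le K$, not $K/\eta$. Your bookkeeping paragraph then stalls precisely because the powers of $K$ and $W$ do not separate cleanly into $\alpha K+\beta W^{2/3}$ at that scale.

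The paper instead writes the Weyl bound as $(L+M-1)\cdot\bigl(L/M+\tfrac{2}{M}\sum(\cdots)\bigr)$ and takes $M=\lceil\eta W^{1/3}\rceil$, so that $L+M-1\le K+\eta W^{1/3}=W^{1/3}(K/W^{1/3}+\eta)$ furnishes the first factor, while $L/M\le K/(\eta W^{1/3})$ supplies the $1/\eta$ in $\alpha$. Substituting $\eta W^{1/3}\le M\le \eta W^{1/3}+1$ into the four summed terms then gives exactly $(K/W^{1/3})\alpha+W^{1/3}\beta$, and the two $W^{1/3}$ factors cancel. Once you switch to this choice of $M$, the rest of your outline goes through with no further changes.
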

\begin{proof}
See Section \ref{third_deriv_proof}.
\end{proof}

\begin{remark}
    Versions of these derivative tests
    for a general phase function, as well as 
several other derivative tests, can be found in \cite{PATEL2022}.
\end{remark}

\section{Proof of Theorem 1}\label{thm1_proof}
We divide the proof into four regions. 
\subsection{Proof for $3 \le t < 200$}
In this range we rely on the interval-arithmetic computations carried out in Hiary \cite{hiary_explicit_2016}, which established 
\[
|\zeta(1/2 + it)| \le 0.595 t^{1/6}\log t
\]
for $3 \le t < 200$.
\subsection{Proof for $200 \le t < 5.5\cdot 10^7$} For this region we use the
Riemann-Siegel-Lehman formula combined with the triangle inequality. Firstly, in
preparation for using Lemma~\ref{rsl_bound}, we note
\[
\frac{4t^{1/4}}{(2\pi)^{1/4}} - 2.08 < 0.592 t^{1/6}\log t,\qquad 200 \le t \le
10^7.
\]
This can be seen by verifying that the difference of the two sides is unimodal
(monotonically increasing then monotonically decreasing),
and so it suffices to check that the difference is positive at the endpoints
$200$ and $10^7$. Hence, our main theorem follows from Lemma \ref{rsl_bound} for
that range of $t$.

Assume now that $10^{7}\le
t < 5.5\cdot 10^7$. We follow a similar argument to Lemma 2.3 in \cite{hiary_explicit_2016}. By Lemma \ref{hiarylem21},
\[
|\zeta(1/2 + it)| \le  2\left|\sum_{n = 1}^{n_1}n^{-1/2 + it}\right| + 1.48 t_0^{-1/4} + 0.127 t_0^{-3/4},\qquad t \ge t_0
\]
where $n_1 = \lfloor\sqrt{t/(2\pi)}\rfloor$ and, in this subsection, $t_0 = 10^7$. 
Next, we observe that
if $h$ is a real-valued function such that $h''(x) > 0$ for $a-1/2\le x\le
b+1/2$, then by Jensen's inequality
\begin{equation}\label{jensenineq}
\sum_{n = a}^b h(n) = \sum_{n = a}^b h\left(\int_{n - \frac{1}{2}}^{n + \frac{1}{2}}x\text{d}x\right) \le \sum_{n = a}^b\int_{n - \frac{1}{2}}^{n + \frac{1}{2}}h(x)\text{d}x = \int_{a - \frac{1}{2}}^{b + \frac{1}{2}}h(x)\text{d}x.
\end{equation}
Therefore, using $h(x) = x^{-1/2}$ and the fact that $n_1 \ge \lfloor\sqrt{t_0 / (2\pi)}\rfloor = 1261$,
\begin{align*}
\left|\sum_{n = 1}^{n_1}n^{-1/2+it}\right| &\le \sum_{n = 1}^{1261}\frac{1}{\sqrt{n}} + \int_{1260.5}^{n_1 + \frac{1}{2}}\frac{\text{d}x}{\sqrt{x}} \\
&\le 69.575 + 2\sqrt{n_1} - 2\sqrt{1260.5} + \frac{1}{\sqrt{n_1}}\int_{n_1}^{n_1 + \frac{1}{2}}\text{d}x\\
&\le 2\sqrt{n_1} + \frac{1}{2\sqrt{1261}} - 1.432\\
&= \frac{2t^{1/4}}{(2\pi)^{1/4}} - 1.417.
\end{align*}
However, $1.48 t_0^{-1/4} + 0.127 t_0^{-3/4} \le 0.027$, so 
\[
|\zeta(1/2 + it)| \le \frac{4t^{1/4}}{(2\pi)^{1/4}} - 2.807 \le 0.618 t^{1/6}\log t
\]
for $10^7 \le t < 5.5\cdot 10^7$. One need verify the last
inequality at the endpoints $10^7$ and $5.5\cdot 10^7$ since the difference of
the two sides is monotonic in between. 

\subsection{Proof for $5.5\cdot 10^7 \le t < 10^{12}$}\label{mediumt}
In this range of $t$ we use the following modified third-derivative test in 
Lemma~\ref{imp_imp_third_deriv_test}.

Throughout this subsection, let $r_0 > 1$ be an integer and 
 $\frac{1}{3} < \phi < \frac{1}{2}$ be a constant, both to be chosen later. 
Furthermore, let 
$$K := \lceil t^{\phi}\rceil, \qquad
n_1 := \lfloor\sqrt{t /2\pi}\rfloor,\qquad 
R := \lfloor n_1 / K\rfloor.$$
Also, let $t_0\ge 5.5\cdot 10^7$, to be chosen later, and suppose $t\ge t_0$. 
By Lemma \ref{hiarylem21}, we have 
\begin{align*}
    |\zeta(1/2 + it)| &\le T_1 + T_2,
\end{align*}
where 
\begin{align*}
    T_1 &:= 2\sum_{1\le n < r_0K} \frac{1}{\sqrt{n}} + 1.48t_0^{-1/4} +
    0.127t_0^{-3/4},\\
    T_2 &:= 2\sum_{r = r_0}^{R - 1}\left|\sum_{rK \le n < (r +
    1)K}\frac{e^{it\log n}}{\sqrt{n}}\right| + 2\left|\sum_{RK \le n \le n_1}\frac{e^{it\log n}}{\sqrt{n}}\right|.
\end{align*}
Define
\[
I_{\phi}(r_0, t_0) := 2\sum_{n = 1}^{\lceil r_0t_0^{\phi}\rceil - 1}\frac{1}{\sqrt{n}} - 4\sqrt{\lceil r_0 t_0^{\phi}\rceil - 1}.
\]
Then, noting that $K \le t^{\phi} + 1$, and following the arguments in Hiary \cite{hiary_explicit_2016},
\begin{align}
T_1 &\le 4\sqrt{r_0K} + I_{\phi}(r_0, t_0) + 1.48t_0^{-1/4} +
    0.127t_0^{-3/4}\notag\\
&\le 4\sqrt{r_0\left(1 + t_0^{-\phi}\right)}t^{\phi / 2} + I_{\phi}(r_0, t_0) + 1.48t_0^{-1/4} + 0.127t_0^{-3/4},\label{T1bound}
\end{align}
for all $t \ge t_0$. Meanwhile, by partial summation,
\begin{align}
    \sum_{r = r_0}^{R - 1}\left|\sum_{rK \le n < (r +1)K}\frac{e^{it\log
    n}}{\sqrt{n}}\right| \le \sum_{r = r_0}^{R -
    1}\frac{1}{\sqrt{rK}}\max_{\Delta \le K}\left|\sum_{k = 0}^{\Delta -
    1}e^{it\log (rK + k)}\right|.
\end{align}
So, since $n_1 \le (R + 1)K$,
\begin{equation}\label{final_piece_bound}
    \left|\sum_{RK \le n \le n_1}\frac{e^{it\log n}}{\sqrt{n}}\right| \le
    \frac{1}{\sqrt{RK}}\max_{\Delta \le K}\left|\sum_{k = 0}^{\Delta -
    1}e^{it\log (RK + k)}\right|,
\end{equation}
it follows
\[
T_2 \le 2\sum_{r = r_0}^R \frac{1}{\sqrt{rK}}\max_{\Delta \le K}\left|\sum_{k = 0}^{\Delta - 1}e^{it\log(rK + k)}\right|.
\]
We apply Lemma \ref{imp_imp_third_deriv_test} with $K_0 := t_0^{\phi} > 1$ to
obtain for any $\eta>0$, 
\begin{align}
T_2 &\le 2\sum_{r = r_0}^R \frac{1}{\sqrt{rK}}\sqrt{\left(\frac{K}{W^{1/3}} +
    \eta\right)(\alpha K + \beta W^{2/3})} \label{S_ineq1} \\
&= \frac{2t^{1/6}}{\pi^{1/6}}\sum_{r = r_0}^R\frac{1}{\sqrt{r(r +
    1)}}\sqrt{\alpha + \frac{\eta\alpha W^{1/3}}{K} + \frac{\beta W^{2/3}}{K} +
    \frac{\eta\beta W}{K^2}},\label{S_ineq}
\end{align}
where $W$, $\lambda$, $\alpha$, $\beta$, and $\mu$
are defined in Lemma \ref{imp_imp_third_deriv_test}.
%\begin{equation}\label{beta3_defn}
%    \begin{split}
%        \alpha &= \frac{1}{\eta} + \frac{\eta\mu}{3W^{1/3}} +
%        \frac{\mu}{3W^{2/3}} + \frac{32\mu}{15\sqrt{\pi}}\sqrt{\eta +
%        W^{-1/3}},\\
%        \beta &= \frac{32}{3\sqrt{\pi \eta}} + \left(2 -
%        \frac{4}{\pi}\right)\frac{1}{W^{1/3}},\\
%        \mu &= \frac{1}{2}\lambda^{2/3}\left(1 + \frac{1}{(1 - t_0^{-\phi})\lambda^{1/3}}\right).
%    \end{split}
%\end{equation}

Using the same trick with the Jensen inequality as in the previous section,  
we observe that
\begin{align}
\sum_{r = r_0}^R\frac{1}{\sqrt{r(r + 1)}} &\le \sum_{r = r_0}^R\int_{r -
    \frac{1}{2}}^{r + \frac{1}{2}}\frac{\text{d}x}{\sqrt{x(x + 1)}} =\int_{r_0 -
    \frac{1}{2}}^{R + \frac{1}{2}}\frac{\text{d}x}{\sqrt{x(x + 1)}} = \left[2\sinh^{-1}\sqrt{x}\right]_{r_0 - \frac{1}{2}}^{R + \frac{1}{2}}\notag\\
&= 2\sinh^{-1}\sqrt{R + \frac{1}{2}} - 2\sinh^{-1}\sqrt{r_0 - \frac{1}{2}}\label{rRbound}
\end{align}
where the inequality follows from using $h(x) = 1/\sqrt{x(x + 1)}$ in \eqref{jensenineq}. In addition,
\begin{equation}\label{arcsinhformula}
2\sinh^{-1}\sqrt{x} = \log x + 2\log\left(1 + \sqrt{1 + \frac{1}{x}}\right)
\end{equation}
for $x\ge 0$, and
\[
R \ge R_0 := \left\lceil \frac{\sqrt{t_0 / 2\pi} - 1}{t_0^{\phi} + 1} -
1\right\rceil,
\]
hence 
\begin{align*}
2\sinh^{-1}\sqrt{R + \frac{1}{2}} &= \log R + \log \left(1 + \frac{1}{2R}\right) + 2\log\left(1 + \sqrt{1 + \frac{1}{R + \frac{1}{2}}}\right)\\
&\le \log R + J(R_0),
\end{align*}
where 
\[
J(R_0) := \log\left(1 + \frac{1}{2R_0}\right) + 2\log\left(1 + \sqrt{1 + \frac{1}{R_0 +  \frac{1}{2}}}\right).
\]
Next, let 
\[
\rho := \frac{1}{\sqrt{2}\pi^{1/6}}\left(1 + \frac{1}{R}\right).
\]
Since $R \le t^{1/2 - \phi}/\sqrt{2\pi}$ and $K \le t^{\phi} + 1$, we have
\begin{equation}\label{W_bound}
W^{1/3} = \frac{\pi^{1/3}(r + 1)K}{t^{1/3}} \le \frac{\pi^{1/3}(R + 1)(t^{\phi} + 1)}{t^{1/3}} \le \rho t^{1/6}\left(1 + \frac{1}{t^{\phi}}\right)
\end{equation}
and 
\begin{equation}\label{WK_bound}
\frac{W^{1/3}}{K} = \frac{\pi^{1/3}(r + 1)K}{Kt^{1/3}} \le \frac{\pi^{1/3}R}{t^{1/3}}\left(1 + \frac{1}{R}\right) \le \rho t^{1/6 - \phi}.
\end{equation}
Therefore,

\begin{equation}\label{beta_terms_bound}
    \begin{split}
        &\frac{\eta\alpha W^{1/3}}{K} \le \frac{\eta\alpha \rho}{t^{\phi - 1/6}},\\
        &\frac{\beta W^{2/3}}{K} = \beta W^{1/3}\frac{W^{1/3}}{K} \le \frac{\beta\rho^2\left(1 + t^{-\phi}\right)}{t^{\phi - 1/3}},\\
        &\frac{\eta\beta W}{K^2} = \eta\beta W^{1/3}\left(\frac{W^{1/3}}{K}\right)^2 \le \frac{\eta\beta\rho^3(1 + t^{-\phi})^2}{t^{2\phi - 1/2}}.
    \end{split}
\end{equation}
We apply the above inequalities to \eqref{S_ineq}, together with the inequality
\[
\rho_0 := \frac{1}{\sqrt{2}\pi^{1/6}}\left(1 + \frac{1}{R_0}\right) \ge \rho,
\]
to obtain
\begin{equation}
    T_2 \le 
    \frac{2t^{1/6}}{\pi^{1/6}}\sum_{r = r_0}^R\frac{1}{\sqrt{r(r + 1)}}
    \sqrt{\alpha + \frac{\eta\alpha \rho_0}{t^{\phi -
    1/6}} + \frac{\beta\rho_0^2\left(1 + t^{-\phi}\right)}{t^{\phi - 1/3}} +
    \frac{\eta\beta\rho_0^3(1 + t^{-\phi})^2}{t^{2\phi - 1/2}}}
\end{equation}
We observe since $\lambda$ is monotonically
decreasing with $r$, $\mu$ is decreasing with $r$. 
Since, in addition, $W$ is monotonically increasing with $r$, 
$\alpha$ and $\beta$ are both decreasing with $r$.
Denoting the values of $W$, $\alpha$ and $\beta$ at $r=r_0$ by $W_0$, $\alpha_0$ and
$\beta_0$, we see that $W\ge W_0$, $\alpha\le \alpha_0$ and $\beta\le \beta_0$.
Therefore, using \eqref{rRbound} and the subsequent estimates, we obtain
\begin{align}
    T_2 &\le \frac{2t^{1/6}}{\pi^{1/6}}\left[\,\left(\frac{1}{2} - \phi\right)\log
    t - \log \sqrt{2\pi} + J(R_0) - 2\sinh^{-1}\sqrt{r_0 -
    \frac{1}{2}}\,\,\right]\kappa_{\phi}
\end{align}
where, as we also have $\phi>1/3$ and $t\ge t_0$, $\kappa_{\phi}$ may be taken to be
\begin{equation}
    \kappa_{\phi}:=\sqrt{\alpha_0 + \frac{\eta\alpha_0 \rho_0}{t_0^{\phi -
    1/6}} + \frac{\beta_0\rho_0^2\left(1 + t_0^{-\phi}\right)}{t_0^{\phi - 1/3}} +
    \frac{\eta\beta_0\rho_0^3(1 + t_0^{-\phi})^2}{t_0^{2\phi - 1/2}}}.
\end{equation}
Explicitly, combining with \eqref{T1bound}, yields
\[
|\zeta(1/2 + it)| \le a_1 t^{1/6}\log t + a_2t^{1/6} + a_3t^{\phi / 2} + a_4, \qquad t\ge t_0,
\]
where, 
\begin{equation}\label{final_mediumt_bound}
    \begin{split}
        a_1 &:= \frac{1 - 2\phi}{\pi^{1/6}}\kappa_{\phi},\\
        a_2 &:= - \frac{2}{\pi^{1/6}}\left[\log \sqrt{2\pi} - J(R_0) +
        2\sinh^{-1}\sqrt{r_0 - \frac{1}{2}}\right]\kappa_{\phi},\\
        a_3 &:= 4\sqrt{r_0\left(1 + t_0^{-\phi}\right)},\\
        a_4 &:= I_{\phi}(r_0, t_0) + 1.48t_0^{-1/4} + 0.127t_0^{-3/4}.
    \end{split}
\end{equation}
We choose $\phi = 0.3414$, $\eta = 1.8$, $r_0 = 4$ and $t_0 = 5.5\cdot 10^7$.
This choice of $r_0$ is valid since it satisfies $r_0\le R_0$.
Also, in view of the chosen values for $t_0$ and $\phi$,
the inequality $W_0\ge \pi (r_0+1)^3 \lceil
K_0\rceil^3/t_0$, for $t\ge t_0$, is valid. We use this inequality to simplify
the bounds for $\alpha_0$ and $\beta_0$, 
considering they are both monotonically decreasing with $W_0$.
Together, we obtain
\begin{align*}
|\zeta(1/2 + it)| &\le 0.59289 t^{1/6} \log t - 8.0314 t^{1/6} + 8.0092
    t^{0.1707} - 2.8796,\qquad t \ge 5.5\cdot 10^7\\
&\le 0.618 t^{1/6}\log t,\qquad \text{for } 5.5\cdot 10^{7} \le t < 10^{8}.
\end{align*}
Similarly, choosing $\phi = 0.3414$, $\eta = 1.8$, $r_0 = 4$ and $t_0 = 10^8$, we obtain
\begin{align*}
|\zeta(1/2 + it)| &\le 0.58589 t^{1/6} \log t - 8.0115 t^{1/6} + 8.0075 t^{0.1707} - 2.8843,\qquad t \ge 10^{8}\\
&\le 0.618 t^{1/6}\log t,\qquad \text{for } 10^{8} \le t < 8.5 \cdot 10^{10}.
\end{align*}
Finally, choosing $\phi = 0.3414$, $\eta = 1.8$, $r_0 = 4$ and $t_0 = 8.5\cdot 10^{10}$, we obtain
\begin{align*}
|\zeta(1/2 + it)| &\le 0.55305 t^{1/6} \log t - 7.8629 t^{1/6} + 8.0008
    t^{0.1707} - 2.9111,\qquad t \ge 8.5\cdot 10^{10}\\
&\le 0.618 t^{1/6}\log t,\qquad \text{for } 8.5\cdot 10^{10} \le t < 10^{12},
\end{align*}
hence the desired result holds for $5.5\cdot 10^7 \le t < 10^{12}$. 

\subsection{Proof for $t \ge 10^{12}$}\label{larget} 
For the region $t \ge 10^{12}$ we use a
similar method as the previous subsection, but with $\phi = 1/3$. Analogously to
before, let $K = \lceil t^{1/3}\rceil$, $n_1 = \lfloor\sqrt{t / 2\pi}\rfloor$,
$R = \lfloor n_1 / K\rfloor$ and, this time, let $t_0 = 10^{12}$. Suppose $t\ge
t_0$.

We bound $T_2$ differently, by splitting the square root in \eqref{S_ineq} as follows. Let 
\begin{equation}\label{Beta_r_defn}
\mathcal{B}_r := \alpha + \frac{\eta\alpha W^{1/3}}{K} + \frac{\beta W^{2/3}}{K} + \frac{\eta\beta W}{K^2},
\end{equation}
so that, recalling \eqref{S_ineq}, we have
\begin{equation}\label{T2_ineq}
T_2 \le \frac{2t^{1/6}}{\pi^{1/6}}\sum_{r = r_0}^R\sqrt{\frac{\mathcal{B}_r}{r(r + 1)}}.
\end{equation}
Substituting $\phi = 1/3$ into \eqref{beta_terms_bound}, and noting that $t \ge
t_0$ and $\rho \le \rho_0$, we deduce
\begin{equation}\label{beta_terms_bound2}
\frac{\eta\alpha W^{1/3}}{K} \le \eta\alpha t_0^{-1/6}\rho_0, \qquad \frac{\eta\beta W}{K^2} \le \eta\beta t_0^{-1/6}\rho_0^3(1 + t_0^{-1/3})^2.
\end{equation}
Next, using the definition of $W$, and since $K \le t^{1/3} + 1$, $r \ge r_0$ and $t \ge t_0$,
\begin{equation}\label{beta_term2_bound}
\frac{1}{r(r + 1)}\frac{\beta W^{2/3}}{K} = \frac{\beta}{r(r + 1)}\frac{\pi^{2/3}(r + 1)^2K}{t^{2/3}} \le \frac{\beta \pi^{2/3}(1 + r_0^{-1})(1 + t_0^{-1/3})}{t^{1/3}}.
\end{equation}
Furthermore, using $\sqrt{x + y}\le \sqrt{x} + \sqrt{y}$, valid 
for any nonnegative numbers $x$ and
$y$, and combining \eqref{Beta_r_defn}, \eqref{beta_terms_bound2} and
\eqref{beta_term2_bound}, as well as the observation $\alpha\le \alpha_0$ and $\beta\le
\beta_0$ which follows by the same reasoning as in subsection \ref{mediumt},  we thus see
\begin{align}
    \sqrt{\frac{\mathcal{B}_r}{r(r + 1)}} &\le \sqrt{\frac{1}{r(r +
    1)}\left(\alpha + \frac{\eta\alpha W^{1/3}}{K} +  \frac{\eta\beta
    W}{K^2}\right)} + \sqrt{\frac{1}{r(r + 1)}\frac{\beta W^{2/3}}{K}}\notag\\
    &\le \frac{\kappa'_0}{\sqrt{r(r + 1)}} 
    + \frac{1}{t^{1/6}}\sqrt{\beta_0\pi^{2/3}(1 + r_0^{-1})(1 + t_0^{-1/3})},\label{Beta_r_bound}
\end{align}
where 
\[
\kappa'_0 := \sqrt{\alpha_0 + \eta\alpha_0 t_0^{-1/6}\rho_0 + \eta\beta_0 t_0^{-1/6}\rho_0^3(1 + t_0^{-1/3})^2}.
\]
We execute the sum over $r$ using \eqref{rRbound} and \eqref{arcsinhformula}.
To bound the resulting terms, we appeal to the estimate
%\begin{equation}\label{sumweightsbound}
%\sum_{r = r_0}^R\frac{1}{\sqrt{r(r + 1)}} \le 2\sinh^{-1}\sqrt{R + \frac{1}{2}} - 2\sinh^{-1}\sqrt{r_0 - \frac{1}{2}}.
%\end{equation}
\[R + \frac{1}{2} \ge \left\lceil \frac{\sqrt{t_0 / 2\pi} - 1}{t_0^{1/3} + 1} -
1\right\rceil + \frac{1}{2} \ge 39.5,\]
valid for $t \ge t_0$, and implying that
\begin{equation}\label{asinh_bound}
2\sinh^{-1}\sqrt{R + \frac{1}{2}} = \log \left(R + \frac{1}{2}\right) +
    2\log\left(1 + \sqrt{1 + \frac{1}{39.5}}\right)\le \log R + 1.412,
\end{equation}
in that range of $t$.
Note, in addition, 
that $R \le t^{1/6}/\sqrt{2\pi}$. Therefore, combined with 
\eqref{asinh_bound} we obtain
\begin{equation}\label{r_sum_bound}
    \sum_{r=r_0}^R \frac{1}{\sqrt{r(r+1)}} \le \frac{1}{6}\log t + \omega_0
\end{equation}
where
\begin{equation}
\omega_0 :=   -\log\sqrt{2\pi}
    +1.412-2\sinh^{-1}\sqrt{r_0-1/2}.
\end{equation}
Using this together with \eqref{Beta_r_bound},
it follows on recalling \eqref{T2_ineq} that
\begin{equation}\label{T2_bound_2}
    \begin{split}
        T_2 
        &\le \frac{2t^{1/6}}{\pi^{1/6}}\left(\frac{\log t}{6} +\omega_0\right)\kappa_0' 
    + \frac{\sqrt{2}t^{1/6}}{\pi^{1/3}}\sqrt{\beta_0(1 + r_0^{-1})(1 +
        t_0^{-1/3})},
    \end{split}
\end{equation}
for $t \ge t_0$.

As for $T_1$, we bound it the same way as in \eqref{T1bound} but with $\phi=1/3$,
which gives
\begin{equation}\label{T1_bound_2}
T_1 \le 4\sqrt{r_0\left(1 + t_0^{-1/3}\right)}t^{1/6} + I_{\frac{1}{3}}(r_0, t_0) + 1.48t_0^{-1/4} + 0.127t_0^{-3/4}.
\end{equation}
for all $t \ge t_0$. 

Finally, combining \eqref{T2_bound_2} and \eqref{T1_bound_2}, we arrive at
\[
|\zeta(1/2 + it)| \le b_1 t^{1/6}\log t + b_2 t^{1/6} + b_3, \qquad\text{for all }t \ge t_0,
\]
where 
\begin{equation*}
    \begin{split}
        b_1 &:= \frac{\kappa'_0}{3\pi^{1/6}},\\
        b_2 &:= 4\sqrt{r_0\left(1 + t_0^{-1/3}\right)} 
        +\frac{2}{\pi^{1/6}}\omega_0 \kappa'_0
        + \frac{\sqrt{2}}{\pi^{1/3}}\sqrt{\beta_0(1 + r_0^{-1})(1 + t_0^{-1/3})},\\
        b_3 &:= I_{\frac{1}{3}}(r_0, t_0) + 1.48t_0^{-1/4} + 0.127t_0^{-3/4}.
    \end{split}
\end{equation*}
Choosing $r_0 = 4$ and $\eta = 1.6$, and
using the inequality $W_0\ge \pi (r_0+1)^3$ 
to remove remaining dependence of $\alpha_0$ and $\beta_0$ on
$t$, yields
\begin{align*}
|\zeta(1/2 + it)| &\le 0.478013 t^{1/6}\log t + 3.853165 t^{1/6} - 2.914229\\
    &\le 0.618 t^{1/6}\log t, \qquad (t\ge t_0),
\end{align*}
as required.

We point out 
one of main reasons for the 
improvement over \cite{hiary_explicit_2016}
 obtained in this subsection. 
After we invoke the explicit $AB$ process
    from Lemma~\ref{imp_imp_third_deriv_test} to arrive at \eqref{S_ineq1},
we pay greater attention to the cross term $(K/W^{1/3})(\beta
W^{2/3})=\beta K W^{1/3}$. 
Specifically, we arrange for this cross term 
to contribute to the coefficient of 
    $t^{1/6}$ in the overall bound in \eqref{T2_bound_2}, 
    rather than to the coefficient of
$t^{1/6}\log t$, as done in \cite{hiary_explicit_2016}.
This saves a factor of $\log t$ from the contribution of this term, which is
a considerable saving. 

\section{Proof of Lemma \ref{cglem}}\label{cglemproof}
The proof proceeds similarly to Lemma 2 in \cite{cheng_explicit_2004} and Lemma
2.1 in Patel \cite{PATEL2022}, with only a few differences. We include the
complete proof here for convenience.

Let $f$ be a function satisfying the conditions of Lemma \ref{cglem}, that is, $f$ has a continuous and monotonic derivative on $[a, b)$ and
\begin{equation}\label{f_cond}
\ell + U^{-1} \le f'(x) \le \ell + 1 - V^{-1},\qquad x\in[a, b).
\end{equation}
for some $U, V > 1$ and some integer $\ell$. 
We may assume that $\ell = 0$, i.e. that $U^{-1}\le f'(x) \le 1 - V^{-1}$, since
\[
\left|e(-\ell n)\sum_{a \le n < b}e(f(n))\right| = \left|\sum_{a \le n <
b}e(f(n) - \ell n)\right|.
\]
We may also assume that $f'(x)$ is increasing, since we may replace $f(x)$ with
$-f(x)$ without changing the magnitude of the sum. 

Now, define $g(x) := f(x + 1) - f(x)$. 
Since by assumption $f'(x)$ is increasing in $x$ over $x\in [a,b)$,
$g'(x) = f'(x + 1) - f'(x) \ge
0$ over $x\in [a,b-1)$. 
Therefore, $g(x)$ is increasing in $x$ over that interval.
Furthermore, by the mean-value theorem, $g(x) = (x + 1 - x)f'(\xi) = f'(\xi)$
for some $\xi\in(x, x + 1)$. Therefore, 
\begin{equation}\label{g_bounds}
U^{-1} \le f'(x) \le f'(\xi) = g(x) \le f'(x + 1) \le 1 - V^{-1},
\end{equation}
for every $x \in [a, b - 1)$.
Thus, for instance, $0<g(x)<1$ over $x\in [a,b-1)$. 

Next, let 
\begin{equation}
G(n) := \frac{1}{1 - e(g(n))} = \frac{1 + i\cot(\pi g(n))}{2},
\end{equation}
so that
\begin{equation}\label{Gn_eqn}
\begin{split}
   G(n)\left[e(f(n)) - e(f(n + 1))\right] &= \frac{e(f(n)) - e(f(n + 1))}{1 - e(g(n))}\\
   &= \frac{e(f(n))[1-e(g(n))]}{1 - e(g(n))} \\
   &= e(f(n)), 
\end{split}
\end{equation}
and also
\begin{equation}\label{G_diff}
G(n) - G(n - 1) = \frac{\cot(\pi g(n - 1)) - \cot(\pi g(n))}{2i}.
\end{equation}

Let $L = \lceil a\rceil$. If $b$ is not an integer, let $M = \lfloor b \rfloor$.
If $b$ is an integer, let $M=b-1$. 
In either cases, the summation over $n\in [a,b)$ is the same as the summation
over $n\in [L,M]$, and $g'(x)$ is increasing in $x\in [L,M-1]$.

Suppose that $M = L$, so there is only one term in the sum. 
Then, by the trivial bound, we have
\begin{equation}
\left|\sum_{a \le n < b}e(f(n))\right| \le 1 \le \frac{U + V}{\pi}.
\end{equation}
Here we use the fact that 
the condition \eqref{f_cond} implies that $U^{-1} \le 1 - V^{-1}$, so
by the inequality of harmonic and arithmetic means, 
\begin{equation}
U + V \ge \frac{4}{U^{-1} + V^{-1}} \ge 4.
\end{equation}
Therefore, the result of the lemma follows when $M=L$.

Next, suppose $M - L \ge 1$. By \eqref{Gn_eqn}, and after a few
rearrangements,
\begin{align*}
\left|\sum_{n = L}^M e(f(n))\right| &= \left|\sum_{n = L}^{M - 1}G(n)[e(f(n)) - e(f(n + 1))] + e(f(M))\right|\\
&= \Bigg|\sum_{n = L + 1}^{M - 1}e(f(n))(G(n) - G(n - 1))  \\
&\qquad\qquad + e(f(L))G(L) + e(f(M))(1 - G(M - 1))\Bigg|\\
&\le \left|e(f(L))G(L)\right| + |e(f(M))(1 - G(M - 1))|\\
&\qquad\qquad + \sum_{n = L + 1}^{M - 1}\left|e(f(n))(G(n) - G(n - 1))\right|\\
&= |G(L)| + |1 - G(M - 1)| + \sum_{n = L + 1}^{M - 1}|G(n) - G(n - 1)|.
\end{align*}
Note that if $M - L = 1$, then the last sum is empty and should be interpreted as
equal to 0. Now, since $g(x)$ is increasing in $x\in [L,M-1]$,  
$\cot(\pi g(x))$ is decreasing in $x$ over the same interval. 
Hence, by \eqref{G_diff},
\[
|G(n) - G(n - 1)| = \frac{\cot(\pi g(n - 1)) - \cot(\pi g(n))}{2},
\]
for $L+1\le n \le M-1$. Therefore,
\begin{align}
\left|\sum_{n = L}^M e(f(n))\right| &=\sum_{n = L + 1}^{M - 1}\frac{1}{2}\left(\cot(\pi g(n - 1)) - \cot(\pi g(n))\right) + \left|\frac{1}{2} + \frac{i}{2}\cot(\pi g(L))\right|\notag \\
&\qquad\qquad + \left|\frac{1}{2} - \frac{i}{2}\cot(\pi g(M - 1))\right| \label{cglem_step1}\\
&= \frac{1}{2}\cot(\pi g(L)) - \frac{1}{2}\cot(\pi g(M - 1)) +
    \frac{1}{2}\frac{1}{\sin(\pi g(L))} + \frac{1}{2}\frac{1}{\sin(\pi g(M - 1))} \label{cglem_step2}\\
&= \frac{1}{2}\cot\left(\frac{\pi}{2}g(L)\right) + \frac{1}{2}\tan\left(\frac{\pi}{2}g(M - 1)\right) \label{cglem_step3}\\
&\le \frac{1}{2}\cot\left(\frac{\pi}{2U}\right) + \frac{1}{2}\tan\left(\frac{\pi}{2}\left(1 - \frac{1}{V}\right)\right) \label{cglem_step4}\\
&\le \frac{U}{\pi} + \frac{V}{\pi},\notag
\end{align}
as required. 
Going from \eqref{cglem_step2} to \eqref{cglem_step3}, 
we combined the various terms using the formulas  
$(1+\cos(\pi x))/\sin(\pi x)=\cot(\pi x/2)$
and $(1-\cos(\pi x))/\sin(\pi x)=\tan(\pi x/2)$, valid for
$0<x<1$. In \eqref{cglem_step4} we used the
monotonicity of $\cot(\pi x)$ and $\tan(\pi x)$ over $0<x<1/2$. In the last
line we used the inequalities 
$\cot(\pi x) < 1/(\pi x)$ 
and $\tan(\pi x/2) < 2/(\pi (1-x))$, valid for $0<x<1$.
Lastly, we note that passing from \eqref{cglem_step1} to
\eqref{cglem_step2} presents no difficulty if $M - L = 1$ since
$\cot(\pi g(L)) - \cot(\pi g(M - 1)) = 0$ in this case. 

\section{Proof of Lemma \ref{imp_imp_second_deriv_test}}\label{second_deriv_proof}
We divide the summation interval 
into about $2k$ suitable subintervals, chosen so that we may 
apply the generalized Cheng--Graham Lemma~\ref{cglem}
on about half of the subintervals and the trivial bound on the remaining
half. The special form of $g$ allows a sharper bound on $k$, 
so that in the definition of $\mu$ in Lemma \ref{imp_imp_second_deriv_test} 
the contribution of $\lambda$ can be reduced to $\lambda^{2/3}$.
Moreover, we adjust the definition of 
the boundary subintervals (determined by $C_0$ and $C_k$ below) 
to further reduce the number of sub-intervals in the sum. 
Recall that $r$ is a positive integer, $\lambda=(r+1)^3/r^3$, and 
$K \ge K_0 > 1$ where $K$ is an integer. 
We will use the following elementary inequality.

\begin{equation}\label{third_upper_bound}
\frac{rK}{rK + K - 1} = \frac{r}{r + (1 - 1/K)} \le \left(1 - K^{-1}\right)^{-1}\frac{r}{r + 1} \le \frac{1}{(1 - K_0^{-1})\lambda^{1/3}}.
\end{equation}
Let us recall that $W= \pi (r+1)^3K^3/t$
and the phase function $g(x)=f(x+m)-f(x)$ where $f(x)=\frac{t}{2\pi}\log(rK+x)$. 
We compute, for $m< L \le K$,  

\begin{align*}
    g'(L - 1 - m) - g'(0) &\le g'(K-1-m)-g'(0)\\ 
%    &= \frac{t}{2\pi}\left[\left(\frac{1}{rK + K -
%    1} - \frac{1}{rK + K - 1 - m}\right) - \left(\frac{1}{rK + m}
%    -\frac{1}{rK}\right)\right] \\
%    &= \frac{t}{2\pi}\left(
%    \frac{m}{(rK + m) rK} -\frac{m}{(rK + K - 1)(rK + K - 1 - m)} 
%    \right) \\
&= \frac{t}{2\pi r^3K^3}\cdot m\cdot \left(\frac{rK(K - 1 - m)}{rK + K - 1 - m}\right)\left(\frac{rK}{rK + m}\right)\left(1 + \frac{rK}{rK + K - 1}\right)\\
    &\le \frac{\lambda}{2W}\cdot m\cdot K\lambda^{-1/3} \cdot 1 \cdot \left(1 +
    \frac{1}{(1 - K_0^{-1})\lambda^{1/3}}\right),
\end{align*}
where in the first line we used that $g'$ is monotonically increasing, 
%in the fourth line we 
%used  $(rK + K - 1)(rK + K - 1 - m)-(rK + m) rK=(K-1-m)(2rK+K-1)$, 
and in the last line we used the inequality \eqref{third_upper_bound},
as well as the observation
\[
\frac{rK(K - 1 - m)}{rK + K - 1 - m} \le \frac{rK\cdot K}{rK + K}=K
\lambda^{-1/3}.
\]
This observation follows since for any real number $\alpha$ the expression $\alpha
y/(\alpha + y)$
 is positive for positive $\alpha$ and $y$, as we have with $\alpha=rK$ and 
 $y\in [K-1-m,K]$, and is
increasing in $y$ away from the possible discontinuity at $y=-\alpha$.
Therefore, by definition of $\mu$, we obtain
\begin{equation}\label{gbound}
g'(L - 1 - m) - g'(0)\le  \frac{mK}{W}\mu.
\end{equation}
We next define
\[C_0 := \lfloor g'(0)\rfloor,\qquad C_k := \lfloor g'(L - 1 - m)\rfloor,\]
and let $\{g'(0)\} = \epsilon_1$ and $\{g'(L - 1 - m)\} = \epsilon_2$, where
$\{x\}$ denotes the fractional part of $x$. Let $\Delta$ be a number such that
$0<\Delta<1/2$, to be chosen later, and let
\begin{align*}
C_j &:= C_{j - 1} + 1,&& 1\le j\le k - 1,\\
x_j &:= \max\{(g')^{-1}(C_j - \Delta), 0\},&& 1\le j \le k,\\
y_j &:= \min\{(g')^{-1}(C_j + \Delta), L - 1 - m\},&& 0\le j \le k.
\end{align*}
So that
\begin{equation}\label{k_bound}
k = C_k - C_0 = g'(L - 1 - m) - \epsilon_2 - (g'(0) - \epsilon_1 ) \le
\frac{mK}{W}\mu + \epsilon_1 - \epsilon_2,
\end{equation}
where we used \eqref{gbound} in the last inequality.
Furthermore, since both $g'$ and $(g')^{-1}$ are increasing, we have, for $1 \le j \le k$,
\begin{align*}
y_j - x_j &= \min\{(g')^{-1}(C_j + \Delta), L - 1 - m\} - \max\{(g')^{-1}(C_j - \Delta), 0\}\\
&= (g')^{-1}(\min\{C_j + \Delta, g'(L - 1 - m)\}) - (g')^{-1}(\max\{C_j - \Delta, g'(0)\})\\
&\le 2\Delta|((g')^{-1})'(\nu_j)|,
\end{align*}
for some $\nu_j$ satisfying
\[
\max\{C_j - \Delta, g'(0)\} \le \nu_j \le \min\{C_j + \Delta, g'(L - 1 - m)\}.
\]
However this implies that $\xi_j := (g')^{-1}(\nu_j) \in [0, L - 1 - m]$ and hence 
\begin{equation}\label{interval_length_bound}
y_j - x_j \le 2\Delta|((g')^{-1})'(\nu_j)| = \frac{2\Delta}{|g''(\xi_j)|}\le \frac{2W\Delta}{m}.
\end{equation}
Next, we observe that $0\le x_1<y_1<x_2<y_2<\cdots<x_k<y_k\le L-m-1$, so the
$x_j$ and $y_j$ interlace. We treat the intervals $[x_1,y_1),
[x_2,y_2),\ldots,[x_k,y_k)$ using the trivial bound, and the intervals
$[y_1,x_2),[y_2,x_3),\ldots,[y_{k-1},x_k)$ using the Kusmin--Landau lemma.
As for the boundary intervals $[0,x_1)$ and $[y_k, L-m-1]$, they will require 
more careful analysis and a separate treatment.

By the triangle inequality, we have for any real numbers 
$a$ and $b$ such that $a\le b$, 
\[
\left|\sum_{a \le n \le b}e(g(n))\right| \le b - a + 1.
\]
Hence, applying \eqref{interval_length_bound},
\begin{align}
\left|\sum_{x_j\le n < y_j}e(g(n))\right| \le y_j - x_j + 1 \le \frac{2W\Delta}{m} + 1.\label{trivial_bound}
\end{align}
As for the complementary intervals $[y_j,x_{j+1})$, 
we have, by construction, $\|g'(x)\| \ge \Delta$ for all $x \in [y_j, x_{j + 1})$. 
So by Lemma \ref{cglem}, for $j = 1, \ldots k - 1$,
\begin{align}\label{kusmin-landaubound}
\left|\sum_{y_j \le n < x_{j + 1}}e(g(n))\right| \le \frac{2}{\pi \Delta}. 
\end{align}
It remains to consider the boundary intervals, starting with $[0,x_1)$. Let
\[
S_0 := \left|\sum_{0 \le n < x_1}e(g(n))\right|.
\]
We consider the following three cases. 
\subsection*{Case 1: $\epsilon_1 \in (1 - \Delta, 1]$}
Since $\epsilon_1 > 1 - \Delta$ and $(g')^{-1}$ is increasing,  
\[
(g')^{-1}(C_1 - \Delta) < (g')^{-1}(\lfloor g'(0)\rfloor + \epsilon_1) = (g')^{-1}(g'(0)) = 0,
\] 
hence $x_1 = 0$ and thus $S_0 = 0$. 
\subsection*{Case 2: $\epsilon_1 \in [0, \Delta)$}
Since $\epsilon_1 < \Delta$, 
\[
y_0 = (g')^{-1}(\lfloor g'(0)\rfloor + \Delta) > (g')^{-1}(g'(0)) = 0.
\]
Therefore, $\|g'(n)\| \ge \Delta$ for $n\in [y_0, x_1)$, so by Lemma \ref{cglem}
and on using the trivial bound to estimate the subsum over $[0,y_0)$ we obtain
\[
S_0 \le \left|\sum_{0 \le n < y_0}e(g(n))\right| + \left|\sum_{y_0 \le n < x_1}e(g(n))\right| \le \frac{W(\Delta - \epsilon_1)}{m} + 1 + \frac{2}{\pi \Delta}.
\]
Here, we also used the analog of \eqref{interval_length_bound} to bound the
length of the first subsum.
\subsection*{Case 3: $\epsilon_1 \in [\Delta, 1 - \Delta]$}
In this case,
$y_0 \le 0 \le x_1$ and 
$\ell+\epsilon_1 \le g'(n) \le \ell+ 1 - \Delta$ for all $n\in [0,
x_1)$, where $\ell=C_0$. So by Lemma \ref{cglem},
\[ 
S_0 \le \frac{1}{\pi}\left(\frac{1}{\epsilon_1} + \frac{1}{\Delta}\right). 
\]

Combining the three cases, we conclude that
\[
S_0 \le \frac{1}{\pi\Delta} + h(\epsilon_1),
\]
where 
\begin{equation}\label{h_def}
h(\epsilon) := \begin{cases}
    \displaystyle
\frac{W(\Delta - \epsilon)}{m} + 1 + \frac{1}{\pi\Delta} &\text{if } \epsilon \in [0, \Delta)\\
    \displaystyle
\frac{1}{\pi\epsilon} &\text{if }\epsilon \in [\Delta, 1 - \Delta]\\
    \displaystyle
-\frac{1}{\pi\Delta} &\text{if } \epsilon \in (1 - \Delta, 1]
\end{cases}
\end{equation}
Similarly, the sum corresponding to the other boundary interval $[y_k,L-1-m]$
satisfies 
\begin{equation}\label{Sk_bound}
S_{k} := \left|\sum_{y_k \le n \le L - 1 - m}e(g(n))\right| \le \frac{1}{\pi\Delta} + h(1 - \epsilon_2).
\end{equation}
The only difference in the treatment of $S_k$ is that if $\epsilon_2\in[\Delta,
1 - \Delta]$, then we use a slightly 
modified version of Lemma \ref{cglem} that holds for
sums over the closed interval $[a, b]$ instead of $[a, b)$. This is easy to
produce
since the bound from that lemma is independent of the length of summation.
%The interval $[x_1, y_k)$ can be partitioned into $k$ intervals of the form $[x_j, y_j)$, $j = 1, \ldots, k$, and $k - 1$ intervals of the form $[y_j, x_{j + 1})$, $j = 1, \ldots, k - 1$. We use the trivial bound for intervals of the first kind and Lemma \ref{cglem} for intervals of the second kind. 

Together with \eqref{trivial_bound} and \eqref{kusmin-landaubound}, 
and using \eqref{k_bound} to bound the sums over $k$, we obtain
\begin{align*}
\left|\sum_{0 \le n \le L - 1 - m}e(g(n))\right| &\le S_0 + \sum_{j = 1}^k\left|\sum_{x_j \le n < y_j}e(g(n))\right| + \sum_{j = 1}^{k - 1}\left|\sum_{y_j \le n < x_{j + 1}}e(g(n))\right| + S_{k}\\
&\le \frac{2kW\Delta}{m} + k +\frac{2(k - 1)}{\pi\Delta} + \frac{1}{\pi\Delta} + h(\epsilon_1) + \frac{1}{\pi\Delta} + h(1 - \epsilon_2)\\
&\le 2\left(\frac{mK\mu}{W} + \epsilon_1 - \epsilon_2\right)\left(\frac{1}{\pi\Delta} + \frac{W\Delta}{m} + \frac{1}{2}\right) + h(\epsilon_1) + h(1 - \epsilon_2).
\end{align*}
To balance the first two terms in the second factor, we choose 
\begin{equation}\label{Delta_choice}
\Delta = \sqrt{\frac{m}{\pi W}}.
\end{equation}
Note that if this choice of $\Delta$ 
is $\ge 1/2$, then the bound we are
trying to prove in Lemma~\ref{imp_imp_second_deriv_test} follows anyway since
the contribution of the first term of the bound will already be 
$4 \mu K \Delta \ge 2\mu K \ge K$, which 
is no better than the trivial bound.
Therefore, we may assume that
our choice of $\Delta$ satisfies $\Delta <1/2$.
Also, with our choice of $\Delta$ we have 
\begin{equation}\label{Delta_formula}
    \frac{W\Delta}{m} = \frac{1}{\pi \Delta}.
\end{equation}
Put together, on defining 
\begin{equation}\label{Hdef}
H(\epsilon) := \left(\frac{4}{\pi\Delta} + 1\right)\epsilon + h(\epsilon),
\end{equation}
and using \eqref{Delta_formula} to simplify in the second line next, we arrive at
\begin{align}
\left|\sum_{0 \le n \le L - 1 - m}e(g(n))\right| &\le
2\left(\frac{mK\mu}{W} + \epsilon_1 - \epsilon_2\right)\left(\frac{2}{\pi\Delta} + \frac{1}{2}\right) + h(\epsilon_1) + h(1 - \epsilon_2)\notag\\
&= \frac{4K\mu}{\sqrt{\pi W}}m^{1/2} + \frac{\mu K}{W}m - \frac{4}{\pi\Delta} - 1  
    + H(\epsilon_1) + H(1 - \epsilon_2).\label{mainbound}
\end{align}

We will show that for $\epsilon \in [0,1]$,
\begin{equation}\label{Hbound}
H(\epsilon) \le \frac{4}{\pi\Delta} - \frac{2}{\pi} + \frac{3}{2}.
\end{equation}
Substituting this bound back into \eqref{mainbound}, the lemma follows. 
We prove the inequality \eqref{Hbound} by considering three cases. 

\subsection*{Case 1: $\epsilon \in [0, \Delta)$} Then 
recalling \eqref{h_def} and using \eqref{Delta_formula}, 
\[
H(\epsilon) = \left(\frac{4}{\pi\Delta} + 1\right)\epsilon + \frac{W(\Delta -
\epsilon)}{m} + 1 + \frac{1}{\pi\Delta} = \left(1 +
\frac{4}{\sqrt{\pi}}\sqrt{\frac{W}{m}} - \frac{W}{m}\right)\epsilon + 1 + \frac{2}{\pi\Delta}.
\]
Noting that $1 + \frac{4x}{\sqrt{\pi}} - x^2 \le 1 + \frac{4}{\pi}$ for all $x$, we
have, since $0 \le \epsilon<\Delta < 1/2$,
\begin{equation}\label{case1bound}
H(\epsilon) < \left(1+\frac{4}{\pi}\right)\Delta + 1 + \frac{2}{\pi\Delta} \le
    \frac{3}{2} + \frac{2}{\pi} 
    + \frac{2}{\pi\Delta} < \frac{4}{\pi\Delta} -
    \frac{2}{\pi} + \frac{3}{2}.
\end{equation}
So the claimed bound on $H(\epsilon)$ in \eqref{Hbound} follows in this case.
%for all $\epsilon \in [0, \Delta)$.
\subsection*{Case 2: $\epsilon \in [\Delta, 1 - \Delta]$} 
Then,
\[
H(\epsilon) = \left(\frac{4}{\pi\Delta} + 1\right)\epsilon + \frac{1}{\pi\epsilon}.
\]
We observe that $H$ is convex over $x>0$ since $H''(x) > 0$. So,
\begin{equation}
H(\epsilon) \le \max\{H(\Delta), H(1 - \Delta)\}.
\end{equation}
On the other hand, since $\Delta < 1/2$,
\begin{equation}\label{Hdeltabound}
H(\Delta) = \frac{4}{\pi} + \Delta + \frac{1}{\pi\Delta} < \frac{4}{\pi} +
    \frac{1}{2} + \left(\frac{4}{\pi\Delta} - \frac{6}{\pi}\right) <
    \frac{4}{\pi\Delta} - \frac{2}{\pi} + \frac{1}{2}.
\end{equation}
Also,
\begin{align*}
H(1 - \Delta) &= \frac{4(1 - \Delta)}{\pi\Delta} + 1 - \Delta + \frac{1}{\pi(1 - \Delta)}.
\end{align*}
Since $x + \frac{1}{\pi x}$ is convex for $x>0$ and $1 - \Delta \in (1/2,1)$, we have 
\[
    1 - \Delta + \frac{1}{\pi(1 - \Delta)} \le \max\left(\frac{1}{2} +
    \frac{2}{\pi}, 1 + \frac{1}{\pi}\right) = 1 + \frac{1}{\pi}.
\]
Hence, 
\begin{equation}\label{H1deltabound}
H(1 - \Delta) \le \frac{4}{\pi\Delta} - \frac{3}{\pi} + 1.
\end{equation}
Combining \eqref{Hdeltabound} and \eqref{H1deltabound}, and using that
$-2/\pi+1/2$ and $-3/\pi+1$ are both $< -2/\pi+3/2$, the claim follows in this
case as well.
\subsection*{Case 3: $\epsilon \in (1 - \Delta, 1]$}Then, using $\Delta < 1/2$,
\begin{equation}\label{case3bound}
H(\epsilon) = \left(\frac{4}{\pi\Delta} + 1\right)\epsilon - \frac{1}{\pi\Delta}
    < \frac{4}{\pi\Delta} +1- \frac{2}{\pi} < \frac{4}{\pi\Delta}
    -\frac{2}{\pi}+\frac{3}{2},
\end{equation}
as claimed.
%Finally, we combine \eqref{mainbound} and \eqref{Hbound} to obtain
%\[
%\left|\sum_{0 \le n \le K - 1 - m}e(g(n))\right| \le \frac{4K\mu}{\sqrt{\pi W}}m^{1/2} + \frac{\mu K}{W}m + 4\sqrt{\frac{W}{\pi}}m^{-1/2} - \frac{4}{\pi} + 2
%\]
%as required.

\section{Proof of Lemma \ref{imp_imp_third_deriv_test}}\label{third_deriv_proof}
In this lemma we propagate the improvements from Lemma \ref{imp_imp_second_deriv_test} through to the third derivative test. The approach remains the same as Hiary \cite{hiary_explicit_2016}. 
We first apply Weyl differencing from Lemma~\ref{weyldifflem} to the given
exponential, then we estimate
the differenced sums $s'_m(K)$. 
By Lemma \ref{imp_imp_second_deriv_test}, we have 
\begin{align*}
\left|s'_m(K)\right| &\le \frac{4K\mu}{\sqrt{\pi W}}m^{1/2} + \frac{\mu K}{W}m + 4\sqrt{\frac{W}{\pi}}m^{-1/2} - \frac{4}{\pi} + 2
\end{align*}
where $\mu$ is as defined in Lemma \ref{imp_imp_second_deriv_test}. We apply the inequalities
\[
\sum_{m = 1}^M\left(1 - \frac{m}{M}\right)\sqrt{m} \le \frac{4}{15}M^{3/2}, \qquad \sum_{m = 1}^M\left(1 - \frac{m}{M}\right)\frac{1}{\sqrt{m}} \le \frac{4}{3}\sqrt{M},
\]
\[\sum_{m = 1}^M\left(1 - \frac{m}{M}\right) \le \frac{1}{2}M, \qquad \sum_{m = 1}^M\left(1 - \frac{m}{M}\right)m \le \frac{1}{6}M^2,\]
which appear after \cite[Lemma 7]{cheng_explicit_2004}, which gives
\begin{align*}
\sum_{m = 1}^M\left(1 - \frac{m}{M}\right)|s'_m(K)| &\le \frac{16\mu K}{15\sqrt{\pi W}}M^{3/2} + \frac{\mu K}{6W}M^2 + \frac{16}{3}\sqrt{\frac{W}{\pi}}M^{1/2} + \frac{1}{2}\left(2 - \frac{4}{\pi}\right)M.
\end{align*}
Upon choosing $M = \lceil \eta W^{1/3}\rceil$, for some $\eta > 0$, we have $\eta W^{1/3} \le M \le \eta W^{1/3} + 1$ and
\begin{align*}
&\frac{2}{M}\sum_{m = 1}^M\left(1 - \frac{m}{M}\right)|s'_m(K)| \\
&\qquad\qquad \le \frac{32\mu K}{15\sqrt{\pi W}}M^{1/2} + \frac{\mu K}{3W}M + \frac{32}{3}\sqrt{\frac{W}{\pi}}M^{-1/2} + \left(2 - \frac{4}{\pi}\right)\\
&\qquad\qquad \le \frac{32\mu K}{15\sqrt{\pi W}}\sqrt{\eta W^{1/3} + 1} + \frac{\mu K}{3W}(\eta W^{1/3} + 1) + \frac{32}{3}\sqrt{\frac{W}{\pi}}\frac{1}{\sqrt{\eta W^{1/3}}} + \left(2 - \frac{4}{\pi}\right)\\
&\qquad\qquad = \frac{K}{W^{1/3}}\left[\frac{\eta\mu}{3W^{1/3}} + \frac{\mu}{3W^{2/3}} + \frac{32\mu}{15\sqrt{\pi}}\sqrt{\eta + W^{-1/3}}\right] \\
&\qquad\qquad\qquad\qquad + W^{1/3}\left[\frac{32}{3\sqrt{\pi \eta}} + \left(2 - \frac{4}{\pi}\right)W^{-1/3}\right]
\end{align*}
Substituting into Lemma \ref{weyldifflem}, we obtain the desired result.

\section{Concluding remarks}
It appears difficult to substantially improve the
constant $0.618$ in Theorem~\ref{main_theorem} 
without resorting to a large-scale numerical 
computation. Such a numerical computation would probably 
need to be extensive enough 
to allow us to, both, avoid using
the Riemann--Siegel-Lehman bound and increase the threshold 
value of $t_0$ where we start applying explicit van der Corput lemmas. 

We briefly describe two
theoretical approaches that may achieve modest improvements. 
Firstly, in Lemma
\ref{imp_imp_second_deriv_test} we used the inequality $\Delta < 1/2$, but this
can be replaced with a sharper inequality. This change will ultimately lower the constant term appearing in the main bound of Lemma \ref{imp_imp_second_deriv_test}. 

Secondly, our application of the third derivative test in
\eqref{final_piece_bound} may be inefficient on the last piece in the main sum
subdivision. This last piece contains $n_1 - RK+1$ terms, yet we always 
bound it as though it
contains $K$ terms. This could be wasteful if $n_1 - RK+1$ is much smaller than
$K$. More careful treatment of this boundary piece may produce savings for
certain ranges of $t$ in the ``intermediate" region, i.e. in the region $5.5\cdot 10^7 \le t < 10^{12}$.

Lastly, the context of this work highlights the 
sensitivity of explicit estimates in number theory to 
errors (even minor ones) that could compound quickly, and points to 
the need for a more automated approach in the future to verify 
explicit theoretical results.
Overall, the incorrect explicit Kusmin--Landau lemma 
affected all the published subconvex explicit estimates on zeta, as well as
other important explicit estimates in number theory. 
For example, the explicit $AB$ process derived in 
\cite[Lemma 1.2]{hiary_explicit_2016} was impacted, 
and in turn, as mentioned earlier, 
the constant $0.63$ in the
main theorem there was also impacted. 
Our work restores the $0.63$ constant and improves it. 
Therefore, we leave intact several works that have subsequently used 
the bound in \cite[Theorem 1.1]{hiary_explicit_2016}. 
\clearpage

\printbibliography
\end{document}